\newtheorem{theorem}{Theorem}[section]
\newtheorem{corollary}[theorem]{Corollary}
\theoremstyle{definition}
\newtheorem{definition}[theorem]{Definition}
\newtheorem{example}[theorem]{Example}
\newtheorem{remark}[theorem]{Remark}
\numberwithin{equation}{section}
\begin{document}
\setcounter{page}{1}

\vspace*{2.0cm}
\title[On Semidefinite Representations of Second-order Conic Optimization Problems]
{On Semidefinite Representations of Second-order Conic Optimization Problems}
\author[Sampourmahani, Mohammadisiahroudi, Terlaky]{ Pouya Sampourmahani$^{*}$, Mohammadhossein Mohammadisiahroudi, Tamás Terlaky}
\maketitle
\vspace*{-0.6cm}

\begin{center}
{\footnotesize
Department of Industrial and Systems Engineering, Lehigh University, Bethlehem, PA, USA, 18015\\
}\end{center}

\vskip 4mm {\footnotesize \noindent {\bf Abstract.}
Second-order conic optimization (SOCO) can be considered as a special case of semidefinite optimization (SDO). In the literature it has been advised that a SOCO problem can be embedded in an SDO problem using the arrow-head matrix transformation. However, a primal-dual solution pair cannot be mapped simultaneously using the arrow-head transformation as we might lose complementarity and duality in some cases. To address this issue, we investigate the relationship between SOCO problems, and their SDO counterpart. Through derivation of standard semidefinite representations of SOCO problems, we introduce admissible mappings. We show that the proposed mappings preserve both feasibility and optimality. Further, we discuss how the optimal partition of a SOCO problem maps to the optimal partition of its SDO counterpart.

 \noindent {\bf Keywords.}
 Second-order conic Optimization; Semidefinite Optimization; Semidefinite Representation; Mapping; Optimal Partition. 

 \noindent {\bf 2020 Mathematics Subject Classification.}
90C25, 90C22, 90C99. }

\renewcommand{\thefootnote}{}
\footnotetext{ $^*$Corresponding author.
\par
E-mail address: pos220@lehigh.edu (P. Sampourmahani), mom219@lehigh.edu (M. Mohammadisiahroudi), terlaky@lehigh.edu (T. Terlaky).
\par
Received xx, x, xxxx; Accepted xx, x, xxxx.

\rightline {\tiny   \copyright  2023 Communications in Optimization Theory}}

\section{Introduction}
In the hierarchy of convex optimization problems, second-order conic optimization (SOCO) problems can be seen as a special case of semidefinite optimization (SDO) problems. SOCO problems minimize a linear function over the intersection of an affine space with the Cartesian product of second-order cones, also known as Lorentz cones. An SDO problem consists of minimizing a linear objective function over the intersection of the cone of positive semidefinite matrices with an affine space. SDO encompasses other subclasses of conic optimization problems namely linear optimization (LO), and SOCO, in the hierarchy. This means that each one can be represented as a special case of  SDO \cite{alizadeh2003second}.

In this paper, we focus on the relationship of SOCO and SDO. We investigate their relationship in order to gain theoretical insight and realize how these problems get mapped to each other. Only a few papers \cite{sim2005note,zhou2016further} were devoted to study this relationship from a theoretical point of view. Sim and Zhao \cite{sim2005note}, in particular, studied the relationship between a SOCO problem and its counterpart SDO problem. They provided a mapping based on the direct correspondence between the dual problems of SOCO and SDO. Their SDO representation is defined on the product of some cones of positive semidefinite matrices, which is a special case of standard SDO and needs further analysis. In this paper, we extend their analysis by considering the actual standard case which returns an SDO representation through a large positive semidefinite cone.

Furthermore, we propose a framework that allows full description of the point-to-set map from SOCO to its SDO counterpart. Then, we analyze how the optimal partition of a SOCO problem is mapped to that of SDO, and vice versa. This is important in understanding the relationship between these two problems as we are mapping between an index-based partition and a subspace-based partition.

Throughout this paper, the following notation is used. The Lorentz cone of dimension $n_i$ is denoted by $\Lcal^{n_i}$, and $\Rmbb^n$ denotes the $n$-dimensional Euclidean space. Superscripts are used to represent cone-related information, and subscripts are used for matrix and vector entries. For a given matrix $A$, $A_{ij}$ represents the $(i,j)$-th entry, while $A_{i}$ denotes the $i$-th matrix. The notation $(.;.;...;.)$ denotes the concatenation of the column vectors.  The set of all $p \times q$ matrices with real entries is denoted by $\Rmbb^{p \times q}$. For a symmetric matrix $X$, $X \succeq 0$ ($X \succ 0$) means $X$ is positive semidefinite (positive definite). Furthermore, the trace operator is denoted by $\trace(.)$. The remaining notations will be introduced at appropriate places.

This paper is structured as follows. Section \ref{sec:prelim} reviews the preliminaries required for this paper. Section \ref{sec:3} studies the relationship between SOCO and SDO relying on the correspondence of dual problems. Section \ref{sec:4} takes the other direction and proposes mappings focusing on correspondence of primal problems as the starting point. Section \ref{sec:5} analyzes how the optimal partition of SOCO maps to that of it's SDO counterpart. Section \ref{sec:6} concludes the paper, summarizing our results.

\section{Preliminaries} \label{sec:prelim}
Let $\Lcal^{n_i}$ denote the Lorentz cone of dimension $n_i$, and $\Lcal^{\nbar} = \Lcal^{n_1} \times \Lcal^{n_2} \times ... \times \Lcal^{n_r}$, where $\nbar = \sum_{i=1}^r n_i$. Then, the primal and dual SOCO problems are defined as follows
\begin{equation} \label{mo:gensocop} \tag{${\Pcal}_{SOCO}$}
    \begin{aligned}
    z_{\Pcal_{SOCO}}^* := \min \ & (c^{1})^T \xbar^1 + ... + (c^{r})^T \xbar^r \\
    \text{  s.t.} \ & A^1\xbar^1 + ... + A^r \xbar^r  = b, \\ 
    \ & \xbar^i \in \Lcal^{n_i}, && \text{ for } i =1,...,r,  
    \end{aligned}
\end{equation}

\begin{equation} \label{mo:gensocod} \tag{${\Dcal}_{SOCO}$} 
    \begin{aligned}
    z_{\Dcal_{SOCO}}^* := \max \ & b^T \ybar  \\
    \text{  s.t.} \ & (A^i)^T \ybar + \sbar^i = c^i, && \text{ for } i =1,...,r, \\
    \ & \sbar^i \in \Lcal^{n_i}, && \text{ for } i =1,...,r, 
    \end{aligned}
\end{equation}
where $c^i \in \Rmbb^{n_i}$, $A^i \in \Rmbb^{m\times n_i}$, $b \in \Rmbb^m$. We define the feasible sets of the primal-dual problems as follows,
\begin{align*}
    \Fcal_{{\Pcal}_{SOCO}} &= \{ (\xbar^1;\xbar^2;...;\xbar^r) \in \Lcal^{\nbar}: A^1\xbar^1 + ... + A^r \xbar^r  = b\},\\
    \Fcal_{{\Dcal}_{SOCO}} &= \{ (\ybar;\sbar^1;\sbar^2;...;\sbar^r) \in \mathbb{R}^m\times \Lcal^{\nbar}: (A^i)^T \ybar + \sbar^i = c^i \text{ for } i =1,...,r\},
\end{align*}
and the sets of optimal solutions as
\begin{align*}
    {\Pcal}_{SOCO}^* &= \{ \xbar^* = (\xbar^1;\xbar^2;...;\xbar^r) \in \Fcal_{{\Pcal_{SOCO}}}: c^T\xbar^*=z_{{\Pcal}_{SOCO}}^*\},\\
    {\Dcal}_{SOCO}^* &= \{ (\ybar^*, \sbar^*) = (\ybar;\sbar^1;\sbar^2;...;\sbar^r) \in \Fcal_{{\Dcal_{SOCO}}}: b^T\ybar^*=z_{{\Dcal}_{SOCO}}^*\},
\end{align*}
respectively. An optimal solution of SOCO, if there exists any, is denoted by $(\xbar^*;\ybar^*;\sbar^*)$. 

Let $\arw(\cdot)$ denote the arrow-head (Lorentz) transformation \cite{mohammad2019conic,wolkowicz2012handbook}, with the structure of
\begin{align*}
    \arw(\xbar^i) := 
    \begin{pmatrix}
    \xbar_1^i       & (\xbar_{2:n_i}^i)^T \\
    \xbar_{2:n_i}^i & \xbar_1^i I_{n_i -1}
    \end{pmatrix},
\end{align*}
where $(\xbar^i_{2:n_i})^T$ denotes the vector $(\xbar^i_2, ..., \xbar^i_{n_i})$. Then, the Jordan product is defined as
\begin{equation*}
    \xbar^i \circ \sbar^i = \arw(\xbar^i)\sbar^i = \arw(\sbar^i)\xbar^i = 
    \begin{pmatrix}
    (\xbar^i)^T \sbar^i\\
    \xbar^i_1 \sbar^i_{2:n_i} + \sbar^i_1 \xbar^i_{2:n_i}
    \end{pmatrix},
    \quad
    i = 1,...,r.
\end{equation*}
Any feasible solutions satisfying $\xbar \circ \sbar = 0$ is called complementary. Here, we have
\begin{equation*}
    \xbar \circ \sbar := (\xbar^1 \circ \sbar^1, \xbar^2 \circ \sbar^2, ..., \xbar^r \circ \sbar^r).
\end{equation*}
 Feasible solution are complementary if and only if they are optimal with zero duality gap.
\begin{definition}
An optimal solution $(\xbar^*; \ybar^*; \sbar^*)\in {\Pcal}_{SOCO}^* \times {\Dcal}_{SOCO}^*$ is called maximally complementary if $\xbar^* \in \ri({\Pcal}_{SOCO}^*)$ and $(\ybar^*; \sbar^*) \in \ri({\Dcal}_{SOCO}^*)$. Further, $(\xbar^*; \ybar^*; \sbar^*)$ is called strictly complementary if $\xbar^* + \sbar^* \in \interior(\Lcal^{\nbar})$.
\end{definition}

Next, we define the primal and dual SDO problems.

\begin{equation} \label{mo:gensdop} \tag{${\Pcal}_{SDO}$}
    \begin{aligned}
     z_{\Pcal_{SDO}}^* := \min  \ & \trace{(CX)} \\
     \text{ s.t.} \ & \trace{(A_i X)} = b_i && \text{ for all } i = 1,...,m, \\
     \ & \quad X \succeq 0, 
    \end{aligned}
\end{equation}
\begin{equation} \label{mo:gensdod} \tag{${\Dcal}_{SDO}$} 
    \begin{aligned}
    z_{\Pcal_{SDO}}^* := \max \ &  b^T y \\
    \text{ s.t.} \ & \sum_{i=1}^m y_i A_i + S = C, \\
    \ & S \succeq 0, 
    \end{aligned}
\end{equation}

where $X, S, C$, and $A_i \text{ for }i=1,...,m$ are $n \times n$ symmetric matrices, and $b,y \in \Rmbb^m$. We define the feasible sets of SDO problems as
\begin{align*}
    \Fcal_{{\Pcal}_{SDO}} &= \{ X \in \Smbb^n : \trace{(A_i X)} = b_i, i = 1,...,m, X \succeq 0\},\\
    \Fcal_{{\Dcal}_{SDO}} &= \{(y,S) \in \Rmbb^m \times \Smbb^n : \sum_{i=1}^m y_i A_i + S = C, S \succeq 0\},
\end{align*}
where $\Smbb^n$ denotes the set of $n\times n$ symmetric matrices.
The sets of optimal solutions for a pair of SDO problems are
\begin{align*}
    {\Pcal}_{SDO}^* &= \{ X^* \in \Fcal_{{\Pcal}_{SDO}} : \trace{(CX^*)} = z^*_{{\Pcal}_{SDO}}\},\\
    {\Dcal}_{SDO}^* &= \{(y^*,S^*) \in \Fcal_{{\Dcal}_{SDO}}: b^T y^* = z^*_{{\Dcal}_{SDO}}\}.
\end{align*} 
 A feasible and an optimal solution of SDO are denoted as $(X,y,S)$, and $(X^*,y^*,S^*)$, respectively. Any feasible solution $(X,y,S)$ satisfying $XS = 0$ is called complementary. Similar to SOCO, a feasible solution is optimal and yields zero duality gap if and only if it is complementary. 
\begin{definition}
A primal-dual optimal solution $(X^*,y^*,S^*) \in {\Pcal}_{SDO}^* \times {\Dcal}_{SDO}^*$ is called maximally complementary if $X^* \in \ri({\Pcal}_{SDO}^*)$ and $(y^*,S^*) \in \ri({\Dcal}_{SDO}^*)$. A maximally complementary optimal solution $(X^*,y^*,S^*)$ is called strictly complementary if $X^* + S^* \succ 0$.
\end{definition}

Next, we define the optimal partitions of SOCO and SDO. The notion of the optimal partition of LO can be extended to SOCO \cite{mohammad2019conic}. Even though a SOCO problem can be embedded in SDO, the optimal partition in SOCO may be more nuanced when it is defined and analyzed directly in the SOCO setting. In SOCO, the index set $\{1,...,r\}$ of the second-order cones is partitioned into four subsets $\Bar{\Bcal}, \Bar{\Ncal}, \Bar{\Rcal}$, and $\Bar{\Tcal}$, where $\Bar{\Tcal}$ is further partitioned to $\Bar{\Tcal} := (\Bar{\Tcal_1}, \Bar{\Tcal_2}, \Bar{\Tcal_3})$ as follows,
\begin{align*}
    \Bar{\Bcal} &:= \{ \ i \ | \ \xbar_1^i > ||\xbar_{2:n_i}^i||_2, \text{ for some } \xbar \in {\Pcal}_{SOCO}^{*} \},\\
    \Bar{\Ncal} &:= \{ \ i \ | \ \sbar_1^i > ||\sbar_{2:n_i}^i||_2, \text{ for some } \sbar \in {\Dcal}_{SOCO}^{*} \},\\
    \Bar{\Rcal} &:= \{ \ i \ | \ \xbar_1^i = ||\xbar_{2:n_i}^i||_2 > 0, \sbar_1^i = ||\sbar_{2:n_i}^i||_2 > 0,  \text{ for some } (\xbar; \ybar; \sbar) \in {\Pcal}_{SOCO}^{*} \times {\Dcal}_{SOCO}^{*} \},\\
    \Bar{\Tcal}_1 &:= \{\ i \ | \ \xbar^i = \sbar^i = 0, \text{ for all } (\xbar; \ybar; \sbar) \in {\Pcal}_{SOCO}^{*} \times {\Dcal}_{SOCO}^{*}\},\\
    \Bar{\Tcal}_2 &:= \{\ i \ | \ \sbar^i = 0, \text{ for all } (\ybar; \sbar) \in {\Dcal}_{SOCO}^{*}, \ {\rm and} \  \xbar_1^i = ||\xbar_{2:n_i}^i||_2 > 0, \text{ for some } \xbar \in {\Pcal}_{SOCO}^{*}\},\\
    \Bar{\Tcal}_3 &:= \{\ i \ | \ \xbar^i = 0, \text{ for all } \xbar \in {\Pcal}_{SOCO}^{*}, \ {\rm and} \ \sbar_1^i = ||\sbar_{2:n_i}^i||_2 > 0, \text{ for some } (\ybar; \sbar) \in {\Dcal}_{SOCO}^{*}\}.
\end{align*}
It should be highlighted that, due to the convexity of the optimal set, $\Bar{\Bcal}, \Bar{\Ncal}, \Bar{\Rcal}$, and $\Bar{\Tcal}$ are mutually disjoint and their union is the index set $\{1,...,r\}$. Therefore, it follows from the complementarity condition that for all $(\xtilde^*;\ytilde^*;\stilde^*) \in {\Pcal}_{SOCO}^* \times {\Dcal}_{SOCO}^*$, $\xtilde^i = 0$ for all $ i \in \Bar{\Ncal}$, and $\stilde^i = 0$ for all $ i \in \Bar{\Bcal}$ \cite{mohammad2019conic}.

For SDO, let $\Bcal := \Rcal(X^*)$ and $\Ncal := \Rcal(S^*)$, where $(X^*,y^*,S^*)$ is a maximally complementary optimal solution, meaning that we have $\Rcal(X) \subseteq \Bcal$ and $\Rcal(S) \subseteq \Ncal$ for all $(X,y,S) \in {\Pcal}_{SDO}^* \times {\Dcal}_{SDO}^*$. By the complementarity condition, the subspaces $\Bcal$ and $\Ncal$ are orthogonal. Moreover, let subspace $\Tcal$, be the orthogonal complement to $\Bcal+\Ncal$. The partition $(\Bcal, \Ncal, \Tcal)$ of $\Rmbb^n$ is called the optimal partition of an SDO problem. We can represent $X^*$ and $S^*$ using a common eigenvector basis, $Q^*$, as $X^* = Q^* \Lambda(X^*) (Q^*)^T$, and $S^* = Q^* \Lambda(S^*) (Q^*)^T$, where $\Lambda(X^*)$ and $\Lambda(S^*)$ corresponds to the diagonal matrices containing the  eigenvalues of $X^*$ and $S^*$, respectively. Thus, we have $\Rcal(X^*) = \Rcal(Q^* \Lambda(X^*))$, and $\Rcal(S^*) = \Rcal(Q^* \Lambda(S^*)).$ In particular, the columns of $Q^*$ corresponding to the positive eigenvalues of $X^*$ and $S^*$ can be chosen as an orthonormal basis for $\Bcal$ and $\Ncal$, respectively \cite{mohammad2019conic}.

Current literature
\cite{alizadeh2003second,andreani2022global,ben2001lectures,de2006aspects,dueri2014automated,jin2013exact,sim2005note,mohammad2019conic,wolkowicz2012handbook} suggests that a SOCO problem can be embedded in an SDO problem using the arrow-head matrix transformation,
\begin{equation} \label{eq:arw}
    \arw(\xbar^i) := 
    \begin{pmatrix}
    \xbar_1^i       & (\xbar_{2:n_i}^i)^T \\
    \xbar_{2:n_i}^i & \xbar_1^i I_{n_i -1}
    \end{pmatrix}
    \succeq 0
    \Leftrightarrow
    \xbar^i \in \Lcal^{n_i}.
\end{equation}
However, this transformation cannot be used to map both primal and dual solutions at the same time. Upon using the arrow-head representation of vectors $\xbar^i$ and $\sbar^i$ simultaneously, we might lose duality and complementarity. The following example illustrates that we may lose complementarity.
\begin{example} \label{exm:1}
Let $(\xbar;\ybar;\sbar)$ be an optimal solution of SOCO and assume that there exist at least one index $i \in \Rcal$. For all $ i \in \Rcal$, we can represent a solution as 
\begin{align*} \label{eq:partrsol}
    \xbar^i = \zeta^i \begin{pmatrix}
    1 \\
    \frac{\xbar_{2:n_i}^i}{||\xbar_{2:n_i}^i||_2}
    \end{pmatrix}
    ,
    \qquad
    \sbar^i = \xi^i \begin{pmatrix}
    1 \\
    \frac{\sbar_{2:n_i}^i}{||\sbar_{2:n_i}^i||_2}
    \end{pmatrix},
    \qquad
    \frac{\xbar_{2:n_i}^i}{||\xbar_{2:n_i}^i||_2} = - \frac{\sbar_{2:n_i}^i}{||\sbar_{2:n_i}^i||_2},
\end{align*}
where $\zeta^i = \xbar_1^i \geq 0$, $\xi^i = \sbar_1^i \geq 0$, and for at least one $(\xbar^*;\ybar^*;\sbar^*)$ we have both  $\zeta^i, \xi^i >0$. Without loss of generality, and for the sake of simplicity, assume that $\zeta^i = \xi^i = 1$. Moreover, let 
\begin{equation*}
    u_j = \frac{\xbar_{j}^i}{||\xbar_{2:n_i}^i||_2} = - \frac{\sbar_{j}^i}{||\sbar_{2:n_i}^i||_2}, \quad j = 2,...,n_i,
\end{equation*}
and $u = (u_2;...;u_{n_i})$. Then, using the arrow-head matrix transformation, we have
\begin{align*}
    X^i = \arw(\xbar^i) = 
    \begin{pmatrix}
    1   & u^T\\
    u & I_{n_i -1}
    \end{pmatrix},
    \qquad
    S^i = \arw(\sbar^i) = 
    \begin{pmatrix}
    1   & -u^T\\
    -u & I_{n_i -1}
    \end{pmatrix}.
\end{align*}
While $\xbar^i \circ \sbar^i = 0$, this transformation does not preserve complementarity as we have
\begin{equation*}
    X^i S^i =
     \begin{pmatrix}
    1   & u^T\\
    u & I_{n_i -1}
    \end{pmatrix}
    \begin{pmatrix}
    1   & -u^T\\
    -u & I_{n_i -1}
    \end{pmatrix} 
    =
    \begin{pmatrix}
    0            & [0]_{1\times(n-1)}\\
    [0]_{(n-1)\times1} & I_{n_i -1} - uu^T
    \end{pmatrix}
    \neq 0.
\end{equation*}
\end{example}
Example \ref{exm:1} shows that the arrow-head matrix transformation is not sufficient to represent a primal-dual pair of SOCO problems as an SDO problem. Thus, it seems worth exploring the actual relationship between an instance of SOCO and it's SDO counterpart.

To address this issue, Sim and Zhao \cite{sim2005note} started from a SOCO dual problem and exploited the arrowhead representation  \eqref{eq:arw} of the dual SOCO problem, to obtain the SDO dual as follows,
\begin{equation} \label{mo:szdual} \tag{$\Dcal_{\rm SZ}$}
    \begin{aligned}
    \max &\ b^T y \\
    \st  &\ \sum_{i=1}^{m} y_i \arw(a_{(i)}^j) + S^j = \arw(c^j) \text{ for all } j = 1,...,r,&& \\
         &\ S^j \succeq 0\text{ for all } j = 1,...,r,                         && 
    \end{aligned}
\end{equation}
where $a_{(i)}^j$ denotes $i^{\rm th}$ row of the matrix $A$ corresponding to Lorentz cone $j$. Observe that $S^j$ as a linear combination of arrow-head matrices is an arrow-head matrix, too. Using this dual model, we get the SDO primal problem as
\begin{equation} \label{mo:szprimal} \tag{$\Pcal_{\rm SZ}$}
    \begin{aligned}
    \min &\ \sum_{j=1}^{r} \trace( \arw(c^j) X^j) \\
    \st  &\ \sum_{j=1}^{r}\trace(\arw(a_{(i)}^j) X^j) = b_i \text{ for all }i = 1,...,m, && \\
         &\ X^j \succeq 0\text{ for all } j = 1,...,r       .                    
    \end{aligned}
\end{equation}
They showed that $X^j = \arw(\xbar^j)$ is not a feasible solution for the SDO primal problem \eqref{mo:szprimal}. In fact, primal feasible solutions of \eqref{mo:szprimal} are fully dense, and do not have an arrow-head structure. To fix this issue, they proposed the mapping 
\begin{equation} \label{eq:sim-zhao-map}
{\rm MR}(\xbar^j)=
\begin{bmatrix}
    \frac{1}{4}\theta^j     & \frac{1}{2}(\xbar_{2:n}^j )^T\\
    \frac{1}{2}\xbar_{2:n}^j& \frac{\xbar_1^j-\|\xbar_{2:n}^j\|}{2(n-1)}I+\frac{\xbar_{2:n}^j(\xbar_{2:n}^j)^T}{\theta^j}
\end{bmatrix},
\end{equation}
where $\theta^j=\xbar_1^j+\|\xbar_{2:n}^j\|+\sqrt{(\xbar^j_1+\|\xbar_{2:n}^j\|)^2-4\|\xbar_{2:n}^j\|^2}$.  

In our study, a key concept is the notion of admissible map which is defined next. 
\begin{definition} \label{def:admismap}
A mapping $\Mcal$ is called admissible if it preserves feasibility and objective function value, i.e.
\begin{align*}
    (\xbar,\ybar,  \sbar) \in \Fcal_{\Pcal_{SOCO}} \times \Fcal_{\Dcal_{SOCO}}  &\Rightarrow  \Mcal(\xbar,\ybar, \sbar) \in \Fcal_{\Pcal_{SDO}} \times \Fcal_{\Dcal_{SDO}},\\
    (X,y,S) \in \Fcal_{\Pcal_{SDO}} \times \Fcal_{\Dcal_{SDO}} &\Rightarrow  \Mcal^{-1}(X,y,S) \in \Fcal_{\Pcal_{SOCO}} \times \Fcal_{\Dcal_{SOCO}},\\
    c^T \xbar = \trace{(CX)}&, \
    b^T \ybar = b^T y.  
\end{align*} 
\end{definition}
The mapping of Sim and Zhao \cite{sim2005note} is admissible, and they proved that it maps a solution from the boundary (interior) of the Lorentz cone to a solution on the boundary (interior) of the cone of semidefinite  matrices. In this paper, we seek to extend their approach and explore mappings that satisfy the definition of admissible mapping. Although the mapping of \cite{sim2005note} is a point to point map, the image of $(\xbar,\ybar,  \sbar)$ might be a point or a set. In addition, the SDO representation of SOCO of \cite{sim2005note}, \eqref{mo:szprimal} and \eqref{mo:szdual}, is defined using the product of multiple cones of positive semidefinite matrices, but we use a more general approach to get an SDO representation in standard form. The major goal of this paper is clarifying more the relationship between SOCO and the related SDO by developing different mappings and exploring the relationship between the optimal partitions of these problems.   

Without loss of generality, in Sections \ref{sec:3} and \ref{sec:4}, we first present the results in case of a single second-order cone, and then we  generalize the results to the multiple cone case. To this end, we consider the following primal and dual problems,
\begin{align}
    &z_{\Pcal^1_{SOCO}}^*=\min \big\{ c^T \xbar \ : \ A\xbar =b, \ \xbar \in \Lcal^n \big\}, \label{mo:socop} \tag{$\Pcal^1_{SOCO}$} \\
    &z_{\Dcal^1_{SOCO}}^*=\max \big\{ b^T \ybar \ : \ A^T \ybar + \sbar = c, \ (\ybar,\sbar) \in \mathbb{R}^m\times\Lcal^n \big\}, \label{mo:socod} \tag{$\Dcal^1_{SOCO}$}
\end{align}
with feasible sets $\Fcal_{\Pcal^1_{SOCO}} =\{ \xbar \in \Lcal^n: A\xbar=b\}$ and $\Fcal_{\Dcal^1_{SOCO}}=\{ (\ybar,\sbar) \in \mathbb{R}^m\times\Lcal^n: A^T \ybar + \sbar = c\}$, and optimal solution sets ${\Pcal^1_{SOCO}}^* = \{ \xbar \in \Fcal_{\Pcal^1_{SOCO}}: c^T\xbar=z_{\Pcal^1_{SOCO}}^*\}$ and ${\Dcal^1_{SOCO}}^*=\{ (\ybar,\sbar) \in \Fcal_{\Dcal^1_{SOCO}}: b^T\ybar=z_{\Dcal^1_{SOCO}}^*\}$, respectively. 

\section{From SOCO to SDO: Starting from the Dual Side} \label{sec:3}
One can derive the SDO counterpart of a SOCO problem starting with either the primal or dual SOCO problem. In this section, similar to \cite{sim2005note}, we initiate the derivation from the dual side of SOCO. Thus, as mentioned earlier, we preserve the arrow-head structure of the matrix $S$ corresponding to dual solution $\sbar$.
\subsection{Derivation and Solution Mapping}
We utilize the arrow-head transformation to the vector $c$ and the rows of matrix $A$,
\begin{align*}
    \Cvec = \arw(c),
    \qquad
    \Avec_i = \arw(a_{(i)}), \quad
    \ i = 1,2, ..., m.
\end{align*}
Since in the SOCO dual $s = c - A^T \ybar$, by applying the arrow-head structure to $A$ and $c$, we have that  $S = \Cvec - \sum_{i=1}^m \ybar_i \Avec_i$ has the arrow-head structure, as it is a linear combination of arrow-head matrices. Therefore, the SDO counterpart of the SOCO dual problem \eqref{mo:socod} is as follows,
\begin{align*} \label{mo:dsdual}
    z^*_{\Dcal^{D}_{SDO}} := \max \left\{ b^T y \ :  \sum_{i=1}^m y_i\Avec_i + S = \Cvec, \  S \succeq 0 \right\}, \tag{$\Dcal^{D}_{SDO}$} 
\end{align*}
which has the following dual,
\begin{equation*} \label{mo:dsprimal}
    z^*_{\Pcal^{D}_{SDO}} := \min \ \left\{ \text{ Tr}(\Cvec X) \ 
    : \  \trace(\Avec_iX) = b_i, \quad i = 1,...,m, \quad X \succeq 0 \right\}, \tag{$\Pcal^{D}_{SDO}$}
\end{equation*}
as it's SDO primal problem.
For the SDO problems \eqref{mo:dsprimal} and \eqref{mo:dsdual}, let
\begin{align*}
    \Fcal_{\Dcal^{D}_{SDO}} &= \{(y,S) \in \Rmbb^m \times \Smbb^n : \sum_{i=1}^m y_i \Avec_i + S = \Cvec,S \succeq 0\},\\
    \Fcal_{\Pcal^{D}_{SDO}} &= \{ X \in \Smbb^n : \trace(\Avec_i X) = b_i, i = 1,...,m, X \succeq 0\}
\end{align*}
represent the feasible sets, and 
\begin{align*}
    {\Dcal^{D}_{SDO}}^* = \{(y,S) \in \Fcal_{\Dcal^{D}_{SDO}}: b^T y = z^*_{\Dcal^{D}_{SDO}}\}, \qquad
    {\Pcal^{D}_{SDO}}^* = \{ X \in \Fcal_{\Pcal^{D}_{SDO}} : \trace{(\Cvec X)} = z^*_{\Pcal^{D}_{SDO}}\},
\end{align*}
represent the optimal solution sets, respectively.
The following theorem provides a point to set admissible mapping, see Definition \ref{def:admismap}  for $r=1$, based on the \eqref{mo:socod} and \eqref{mo:socop}, and their representations \eqref{mo:dsdual}, \eqref{mo:dsprimal}.
\begin{theorem} \label{th:1}
Consider the SOCO problem pairs (\ref{mo:socop}) and (\ref{mo:socod}) with $(\xbar, \ybar, \sbar) \in \Fcal_{\Pcal^1_{SOCO}} \times \Fcal_{\Dcal^1_{SOCO}}$, and the SDO problem pairs (\ref{mo:dsprimal}) and (\ref{mo:dsdual}) with $(X,y,S) \in \Fcal_{\Pcal^{D}_{SDO}}\times \Fcal_{\Dcal^{D}_{SDO}}$. Then, mapping $(X,y,S) = \Mcal(\xbar,\ybar,\sbar)$ with
\begin{align*}
    S = {\rm Arw}(\sbar), \qquad
    y = \ybar,            \qquad
    X = 
    \begin{bmatrix}
    X_{11} & X_{12} & \hdots & X_{1n}\\
    X_{12} & X_{22} & \hdots & X_{2n}\\
    \vdots & \vdots & \ddots & \vdots\\
    X_{1n} & X_{2n} & \hdots & X_{nn}
    \end{bmatrix}
    \succeq 0 \  {\rm with} \
    \begin{bmatrix}
    \sum_{i=1}^n X_{ii}  \\
    X_{12}               \\
    \vdots               \\
    X_{1n}            
    \end{bmatrix} 
    = \begin{bmatrix}
    \xbar_1\\
    \frac{\xbar_2}{2}\\
    \vdots\\
    \frac{\xbar_n}{2}
    \end{bmatrix}, 
\end{align*}
is a point-to-set admissible mapping. In addition, the inverse mapping denoted by $(\xbar,\ybar,\sbar) = \Mcal^{-1}(X,y,S)$, with 
\begin{align*}
    \sbar = {\rm Arw}^{-1}(S), \qquad
    \ybar = y, \qquad
    \xbar = 
    \begin{bmatrix}
    \sum_{i=1}^n X_{ii},  
    2X_{12},              
    \hdots,               
    2X_{1n}        
    \end{bmatrix}^T,
\end{align*}
is a point-to-point admissible mapping.
\end{theorem}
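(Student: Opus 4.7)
My plan is to verify each clause of Definition \ref{def:admismap} separately, in both directions, with $r=1$. For the forward map $\Mcal$, the dual side is essentially immediate: $S=\arw(\sbar)\succeq 0$ by the arrow-head characterization \eqref{eq:arw} of $\Lcal^n$, and applying $\arw$ termwise to $A^T\ybar+\sbar=c$ yields $\sum_i y_i\Avec_i+S=\Cvec$, by linearity of $\arw$. On the primal side, the main computational step is the direct expansion
\[
\trace(\Avec_i X)\;=\; a_{(i),1}\sum_{k=1}^n X_{kk}\;+\;2\sum_{j=2}^n a_{(i),j}X_{1j}\;=\; a_{(i),1}\xbar_1+\sum_{j=2}^n a_{(i),j}\xbar_j\;=\;a_{(i)}^T\xbar,
\]
where the second equality uses the imposed constraints $\sum_k X_{kk}=\xbar_1$ and $X_{1j}=\xbar_j/2$; since $A\xbar=b$, this equals $b_i$. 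Exactly the same computation with $c$ in place of $a_{(i)}$ yields $\trace(\Cvec X)=c^T\xbar$, and $b^Ty=b^T\ybar$ is trivial, so objective values are preserved. To justify that $\Mcal(\xbar,\ybar,\sbar)$ is nonempty (thus a bona fide point-to-set map into $\Fcal_{\Pcal^D_{SDO}}\times\Fcal_{\Dcal^D_{SDO}}$), I would exhibit one witness: e.g.\ the Sim--Zhao matrix $\mathrm{MR}(\xbar)$ in \eqref{eq:sim-zhao-map} already meets the imposed first-row conditions by inspection, and a short algebraic reduction using the identity $\theta^2+4\|\xbar_{2:n}\|^2=2(\xbar_1+\|\xbar_{2:n}\|)\theta$ shows $\trace(\mathrm{MR}(\xbar))=\xbar_1$.

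For the inverse map $\Mcal^{-1}$, the dual side is again straightforward: any dual feasible $S=\Cvec-\sum_i y_i\Avec_i$ is a linear combination of arrow-head matrices, hence itself arrow-head, so $\sbar=\arw^{-1}(S)$ is well-defined; $\sbar\in\Lcal^n$ follows from $S\succeq 0$ via \eqref{eq:arw}, and applying $\arw^{-1}$ to the dual equation returns $A^T\ybar+\sbar=c$. The primal constraint $A\xbar=b$ and objective preservation follow from the same trace identity as above, simply read in reverse. The inverse is point-to-point because the quantities $\trace(X)$ and $X_{12},\dots,X_{1n}$ are uniquely determined by $X$.

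The step I expect to be the main obstacle is proving $\xbar=\bigl(\sum_k X_{kk};\,2X_{12};\,\ldots;\,2X_{1n}\bigr)\in\Lcal^n$ starting only from $X\succeq 0$. My plan here is to exploit the fact that every $2\times 2$ principal submatrix $\bigl(\begin{smallmatrix} X_{11} & X_{1j}\\ X_{1j} & X_{jj}\end{smallmatrix}\bigr)$ of a PSD matrix is itself PSD, so $X_{1j}^2\le X_{11}X_{jj}$, and then to chain
\[
\|\xbar_{2:n}\|_2^2\;=\;4\sum_{j=2}^n X_{1j}^2\;\le\;4X_{11}\sum_{j=2}^n X_{jj}\;\le\;\Bigl(X_{11}+\sum_{j=2}^n X_{jj}\Bigr)^{\!2}\;=\;\xbar_1^2,
\]
using the AM--GM inequality $4ab\le(a+b)^2$. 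Combined with $\xbar_1=\trace(X)\ge 0$, this gives $\xbar\in\Lcal^n$ and completes the verification of admissibility in both directions.
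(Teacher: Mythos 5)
Your proposal is correct and follows essentially the same route as the paper's proof in Appendix A: termwise application of $\arw$ plus linearity for the dual side, the trace expansion using the imposed first-row/trace conditions for the primal side, and the $2\times 2$ principal-submatrix argument combined with AM--GM to recover $\xbar\in\Lcal^n$ from $X\succeq 0$. The only addition is your explicit nonemptiness witness via the Sim--Zhao matrix, which the paper defers to the rank-one construction of Theorem \ref{th:2}; this is a harmless and slightly more self-contained touch.
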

\begin{proof}
The proof of this theorem is presented in Appendix \ref{appsec:1}.
\end{proof}

The following corollaries restate that the provided mapping preserves the objective function value.
\begin{corollary} \label{col:d-obj}
We have $z_{\Pcal^1_{SOCO}}^* = z^*_{\Pcal^{D}_{SDO}}$, and $z_{\Dcal^1_{SOCO}}^* = z^*_{\Dcal^{D}_{SDO}}$.
\end{corollary}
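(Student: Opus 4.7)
The plan is to derive Corollary \ref{col:d-obj} as a short consequence of Theorem \ref{th:1} together with the notion of admissible mapping from Definition \ref{def:admismap}. Since both $\Mcal$ and $\Mcal^{-1}$ preserve feasibility and objective function value, the optimal values on the two sides must agree through matching inequalities in both directions.

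For the primal equality $z^*_{\Pcal^1_{SOCO}} = z^*_{\Pcal^{D}_{SDO}}$, I would first take an arbitrary $\xbar \in \Fcal_{\Pcal^1_{SOCO}}$. By Theorem \ref{th:1}, any $X$ produced by $\Mcal$ from $\xbar$ lies in $\Fcal_{\Pcal^{D}_{SDO}}$ and satisfies $c^T\xbar = \trace(\Cvec X)$, so $z^*_{\Pcal^{D}_{SDO}} \leq c^T\xbar$; taking the infimum over $\xbar$ gives $z^*_{\Pcal^{D}_{SDO}} \leq z^*_{\Pcal^1_{SOCO}}$. For the reverse inequality, I would take an arbitrary $X \in \Fcal_{\Pcal^{D}_{SDO}}$ and use $\Mcal^{-1}$ to obtain a feasible $\xbar$ with $c^T\xbar = \trace(\Cvec X)$, whence $z^*_{\Pcal^1_{SOCO}} \leq \trace(\Cvec X)$, and the infimum over $X$ yields $z^*_{\Pcal^1_{SOCO}} \leq z^*_{\Pcal^{D}_{SDO}}$.

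The dual equality $z^*_{\Dcal^1_{SOCO}} = z^*_{\Dcal^{D}_{SDO}}$ follows by the same argument with the roles of primal and dual swapped. Here the situation is even simpler: the $y$-components of $\Mcal$ and $\Mcal^{-1}$ satisfy $y = \ybar$, so the dual objective values $b^T\ybar$ and $b^Ty$ are literally equal on matched feasible points, and admissibility in both directions delivers the equality of suprema.

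There is essentially no main obstacle; the corollary is a direct reformulation of the objective preservation already built into admissibility. The only minor point worth flagging is that nothing in this argument requires strong duality within the SOCO or SDO instances separately: only the bijective transfer of feasible points and the matching of objective values are used, so edge cases such as infeasibility or unboundedness on either side are absorbed automatically by Theorem \ref{th:1} and need no separate treatment.
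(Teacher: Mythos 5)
Your proposal is correct and matches the paper's intent: the paper gives no separate proof of Corollary \ref{col:d-obj}, treating it as an immediate consequence of the admissibility established in Theorem \ref{th:1}, which is exactly the two-sided feasibility-and-objective transfer argument you spell out. The only detail worth noting is that the forward map is point-to-set, so one should (as Theorem \ref{th:2} guarantees via the rank-one construction) know the image set is nonempty before taking the infimum, but this is already packaged into the admissibility statement.
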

\begin{corollary} \label{col:d-opt}
A feasible solution $(\xbar,\ybar,\sbar) \in \Fcal_{\Pcal^1_{SOCO}} \times \Fcal_{\Dcal^1_{SOCO}}$ is optimal for a pair of SOCO problems \eqref{mo:socop}, and \eqref{mo:socod} with optimal value $(z^*_P,z^*_D)$ if and only if the mapped solution $(X,y,S)$ is optimal for the SDO problems \eqref{mo:dsprimal}, and \eqref{mo:dsdual} with optimal value $(z^*_P,z^*_D)$  . 
\end{corollary}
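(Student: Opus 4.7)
The plan is to derive Corollary \ref{col:d-opt} as an immediate consequence of the admissibility of the mapping $\Mcal$ established in Theorem \ref{th:1}, together with the equality of optimal values stated in Corollary \ref{col:d-obj}. Since admissibility already bundles together preservation of feasibility and objective value, essentially no additional technical work is required; the argument is a short chase through the definitions.

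First I would handle the forward implication. Assume $(\xbar,\ybar,\sbar)\in\Fcal_{\Pcal^1_{SOCO}}\times\Fcal_{\Dcal^1_{SOCO}}$ is optimal with values $(z_P^*,z_D^*)$, so $c^T\xbar=z_P^*$ and $b^T\ybar=z_D^*$. Let $(X,y,S)=\Mcal(\xbar,\ybar,\sbar)$. By Theorem \ref{th:1}, this image lies in $\Fcal_{\Pcal^D_{SDO}}\times\Fcal_{\Dcal^D_{SDO}}$, and the objective identities in Definition \ref{def:admismap} give $\trace(\Cvec X)=c^T\xbar=z_P^*$ and $b^Ty=b^T\ybar=z_D^*$. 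Invoking Corollary \ref{col:d-obj}, these common values coincide with $z^*_{\Pcal^D_{SDO}}$ and $z^*_{\Dcal^D_{SDO}}$, which certifies optimality of $(X,y,S)$ for the SDO pair.

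Next I would handle the reverse direction symmetrically using the inverse map. Given an optimal $(X,y,S)\in\Fcal_{\Pcal^D_{SDO}}\times\Fcal_{\Dcal^D_{SDO}}$, set $(\xbar,\ybar,\sbar)=\Mcal^{-1}(X,y,S)$. The point-to-point inverse in Theorem \ref{th:1} is admissible, hence feasibility transfers back to SOCO, and the same objective-preservation identities yield $c^T\xbar=\trace(\Cvec X)=z^*_{\Pcal^D_{SDO}}$ and $b^T\ybar=b^Ty=z^*_{\Dcal^D_{SDO}}$. Applying Corollary \ref{col:d-obj} in the other direction, these match $z^*_{\Pcal^1_{SOCO}}$ and $z^*_{\Dcal^1_{SOCO}}$, so the preimage is optimal for SOCO.

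The only mildly subtle point — and what I would flag as the one step deserving care rather than an obstacle — is that $\Mcal$ is point-to-set rather than point-to-point, so ``the mapped solution'' in the statement must be read as any selection from $\Mcal(\xbar,\ybar,\sbar)$. The argument above is unaffected, since admissibility in Definition \ref{def:admismap} holds for every element of the image set: every $(X,y,S)\in\Mcal(\xbar,\ybar,\sbar)$ is SDO-feasible with objective values $c^T\xbar$ and $b^T\ybar$, so optimality is inherited uniformly across the image. This completes the biconditional with matching optimal values $(z_P^*,z_D^*)$.
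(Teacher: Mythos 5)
Your proposal is correct and follows exactly the route the paper intends: the corollary is stated as an immediate consequence of the admissibility established in Theorem \ref{th:1} (feasibility and objective preservation in both directions) combined with the equality of optimal values from Corollary \ref{col:d-obj}, and the paper offers no separate proof beyond this. Your remark that the point-to-set nature of $\Mcal$ is harmless because admissibility holds for every selection from the image is a sensible clarification consistent with the paper's usage.
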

\begin{corollary} \label{col:d-comp}
A feasible solution $(\xbar,\ybar,\sbar) \in \Fcal_{\Pcal^1_{SOCO}} \times \Fcal_{\Dcal^1_{SOCO}}$ is optimal for a pair of SOCO problems \eqref{mo:socop}, and \eqref{mo:socod} with zero duality gap if and only if the mapped solution $(X,y,S)$ is optimal for the SDO problems \eqref{mo:dsprimal}, and \eqref{mo:dsdual} with zero duality gap, i.e., 
$$\xbar \circ \sbar=0\iff \trace{(XS)}=0$$
\end{corollary}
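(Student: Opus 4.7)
The plan is to reduce the complementarity equivalence to a single identity between the SOCO and SDO duality gaps, and then to apply the self-duality of the Lorentz cone and of the positive semidefinite cone to translate that identity into the statement about the Jordan product and the matrix product.

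First, I would evaluate both duality gaps in closed form using only feasibility. For $(\xbar,\ybar,\sbar)\in \Fcal_{\Pcal^1_{SOCO}}\times \Fcal_{\Dcal^1_{SOCO}}$, substituting $c=A^T\ybar+\sbar$ and using $A\xbar=b$ gives
\begin{equation*}
c^T\xbar - b^T\ybar = \xbar^T(A^T\ybar+\sbar) - b^T\ybar = \xbar^T\sbar.
\end{equation*}
Analogously, for $(X,y,S)\in \Fcal_{\Pcal^{D}_{SDO}}\times\Fcal_{\Dcal^{D}_{SDO}}$, substituting $\Cvec=\sum_{i=1}^m y_i\Avec_i+S$ and using $\trace(\Avec_i X)=b_i$ gives $\trace(\Cvec X)-b^Ty=\trace(SX)$. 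The two objective-preservation identities that come with the admissibility of $\Mcal$ (Theorem \ref{th:1} and Corollary \ref{col:d-obj}), namely $c^T\xbar=\trace(\Cvec X)$ and $b^T\ybar=b^Ty$, then yield the key identity
\begin{equation*}
\xbar^T\sbar = \trace(XS).
\end{equation*}

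Next, I would invoke the cone-complementarity characterizations. Reading off the first coordinate of $\xbar\circ\sbar$ shows $\xbar^T\sbar$ is the top entry, and it is standard for the Lorentz cone (see, e.g., \cite{mohammad2019conic}) that for $\xbar,\sbar\in\Lcal^n$ one has $\xbar^T\sbar=0$ iff $\xbar\circ\sbar=0$; likewise, for $X,S\succeq 0$, $\trace(XS)=0$ iff $XS=0$. Chaining these equivalences with the central identity produces $\xbar\circ\sbar=0 \iff \trace(XS)=0$. The ``optimal with zero duality gap'' clause then follows directly from Corollary \ref{col:d-opt} together with the fact that in both conic settings a feasible solution has zero duality gap precisely when it is complementary. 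The only non-routine ingredient is the Lorentz-cone equivalence $\xbar^T\sbar=0\iff\xbar\circ\sbar=0$; I expect this to be the only place one must appeal to a previously established lemma, and it can be verified quickly by expanding the Jordan product and applying Cauchy--Schwarz to the defining inequalities $\xbar_1\ge\|\xbar_{2:n}\|_2$ and $\sbar_1\ge\|\sbar_{2:n}\|_2$.
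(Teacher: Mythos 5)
Your proposal is correct and follows essentially the route the paper intends: the paper states this corollary as an immediate consequence of the admissibility established in Theorem \ref{th:1} (objective-value and feasibility preservation), and your argument simply makes that explicit by equating the two duality gaps via $c^T\xbar-b^T\ybar=\xbar^T\sbar$ and $\trace(\Cvec X)-b^Ty=\trace(SX)$ and then invoking the standard cone-complementarity characterizations, which the paper itself records in the preliminaries. No gap; the Lorentz-cone step $\xbar^T\sbar=0\iff\xbar\circ\sbar=0$ is indeed the only ingredient needing a citation or a short Cauchy--Schwarz verification.
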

Note that these results are valid regardless of duality (strong / weak with gap), status of the SOCO problems. Furthermore, observe that one can propose different admissible mappings that satisfies the conditions presented in Theorem \ref{th:1}. In Section \ref{sec:r1map}, we propose a rank-one mapping, which is the simplest option. In Section \ref{ssec:FRM}, we show that when $\xbar \in {\rm int}(\Lcal^n)$, full rank mappings can also be obtained, which map a solution in the interior of SOCO to a solution in the interior of the SDO cone.

\subsection{Rank-one Mapping} \label{sec:r1map}
In this section, we construct a rank-one matrix $X$ for vector $\xbar$ that satisfies the conditions in Theorem \ref{th:1}. Thus, we introduce the vector $\beta\in\Rmbb^n$, and define the rank-one matrix.
\begin{align*}
    X = \beta \beta^T, 
\end{align*}
with
\begin{align*}
    \sum_{i=1}^n {\beta}_i^2 &= \xbar_1,\\
    \beta_1 \beta_j &= \frac{\xbar_j}{2} \text{  for all } j = 2, ..., n.
\end{align*}
We need to solve this $n$-variable-$n$-equation system. The solution of this system is $\beta = 0$ if $\xbar = 0$, and if $\xbar \neq 0$ then
\begin{equation*}
    \beta = \frac{1}{\sqrt{2(\xbar_1 + \delta)}}(\xbar_1 + \delta,\xbar_2, ..., \xbar_n)^T,
\end{equation*}
where $\delta = \sqrt{(\xbar_1)^2 - ||\xbar_{2:n}||^2}$. It is easy to see that if we are on the boundary of the second-order cone, then $\delta = 0$. Otherwise, we have $\delta \neq 0$. Using this vector, we can construct a suitable matrix $X$. 

\begin{theorem} \label{th:2} 
Consider the rank-one mapping with
\begin{align} \label{mo:r1map}
    X =
    \begin{cases}
    [0]_{n \times n} & \text{ if } \xbar = (0,0,...,0),\\
    {\rm DMR}^{1}(\xbar)              & \text{ otherwise}.
    \end{cases}
\end{align}
where
\begin{equation*}
    {\rm DMR}^{1}(\xbar) = \beta \beta^T =  
    \begin{bmatrix}
    \frac{\xbar_1 + \delta}{2} & \frac{\xbar_2}{2} & \hdots & \frac{\xbar_n}{2} \\
    \frac{\xbar_2}{2} & \frac{\xbar_2^2}{2[\xbar_1 + \delta]} & \hdots & \frac{\xbar_2 \xbar_n}{2[\xbar_1 + \delta]} \\
    \vdots    & \vdots    & \ddots & \vdots    \\
    \frac{\xbar_n}{2} & \frac{\xbar_2 \xbar_n}{2[\xbar_1 + \delta]} & \hdots & \frac{\xbar_n^2}{2[\xbar_1 + \delta]} \\
    \end{bmatrix}.
\end{equation*}
Then, (\ref{mo:r1map}) together with $(y,S) = (\ybar,{\rm Arw}(\sbar))$ is a point-to-point admissible mapping.
\end{theorem}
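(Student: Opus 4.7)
The plan is to reduce Theorem \ref{th:2} to Theorem \ref{th:1}. Since $(y,S) = (\ybar, \arw(\sbar))$ is exactly the choice prescribed in Theorem \ref{th:1}, it suffices to check that the explicit rank-one matrix $X = \beta\beta^T$ satisfies the three structural conditions imposed there on the primal block: positive semidefiniteness, the trace identity $\sum_{i=1}^n X_{ii} = \xbar_1$, and the first-row identities $X_{1j} = \xbar_j/2$ for $j = 2,\ldots,n$. Once these are verified, admissibility follows immediately from Theorem \ref{th:1}, and the map is point-to-point because $\beta$, and hence $X$, is uniquely determined by $\xbar$ via the closed-form expression.

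First I would confirm that $\beta$ is well-defined whenever $\xbar \in \Lcal^n \setminus \{0\}$. The inclusion $\xbar \in \Lcal^n$ gives $\xbar_1 \geq \|\xbar_{2:n}\|_2$, so the radicand $\xbar_1^2 - \|\xbar_{2:n}\|_2^2$ is nonnegative and $\delta$ is a nonnegative real number. If $\xbar \neq 0$, then necessarily $\xbar_1 > 0$ (the alternative $\xbar_1 = 0$ would force $\xbar_{2:n} = 0$ and hence $\xbar = 0$), so $\xbar_1 + \delta > 0$ and the denominator appearing in $\beta$ is strictly positive.

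Next I would verify the two algebraic identities. Positive semidefiniteness is automatic because $X = \beta\beta^T$ is a real outer product. For the trace condition, using the relation $\delta^2 + \|\xbar_{2:n}\|_2^2 = \xbar_1^2$,
\begin{align*}
\sum_{i=1}^n \beta_i^2 = \frac{(\xbar_1+\delta)^2 + \|\xbar_{2:n}\|_2^2}{2(\xbar_1+\delta)} = \frac{2\xbar_1^2 + 2\xbar_1\delta}{2(\xbar_1+\delta)} = \xbar_1.
\end{align*}
The first-row identity is a direct simplification: $\beta_1 \beta_j = \frac{(\xbar_1+\delta)\xbar_j}{2(\xbar_1+\delta)} = \xbar_j/2$ for $j \geq 2$. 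The degenerate case $\xbar = 0$ is handled separately by $X = 0$, which trivially satisfies all three conditions.

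I do not anticipate a substantive obstacle. The only delicate point is the behaviour of the formula on the boundary of the Lorentz cone, where $\delta = 0$; but the discussion above shows that $\xbar \neq 0$ guarantees $\xbar_1 + \delta > 0$, so the expression for $\beta$ is well-defined throughout $\Lcal^n \setminus \{0\}$. After the computations above, it only remains to invoke Theorem \ref{th:1} to conclude admissibility.
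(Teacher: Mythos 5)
Your proposal is correct and follows exactly the route the paper takes: the paper's own proof of Theorem \ref{th:2} simply observes that it suffices to verify that ${\rm DMR}^{1}(\xbar)=\beta\beta^T$ satisfies the conditions of Theorem \ref{th:1}, which is precisely what you do (with the welcome extra care about well-definedness of $\beta$ on the boundary of the cone). No gaps.
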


\begin{proof}
The proof is straightforward as it is enough to show that matrix ${\rm DMR}^{1}(\xbar)$ satisfies the conditions in Theorem \ref{th:1}.
\end{proof}

\subsection{Higher Rank Mapping} \label{ssec:FRM}
In this section, we show that when a SOCO solution is in the interior of the cone, i.e. $\xbar \in {\rm int}(\Lcal^n)$, then we can use a full rank mapping, i.e.
\begin{equation*}
    {\rm DMR}^{n}(\xbar)= \sum_{i=1}^{n}\beta^i(\beta^i)^T.
\end{equation*}

\begin{theorem} \label{th:3}
There exist mappings  ${\rm DMR}$ where {\rm rank}$({\rm DMR}(\bar{x}))=n$ for $\xbar\in {\rm int}(\Lcal^n)$.
\end{theorem}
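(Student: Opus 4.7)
The plan is to exhibit an explicit full-rank admissible mapping by exploiting the degrees of freedom left over after the linear constraints of Theorem \ref{th:1} are imposed. Recall that a valid $X$ must satisfy $X_{1j}=\xbar_j/2$ for $j=2,\dots,n$ and $\sum_{i=1}^n X_{ii}=\xbar_1$, while $X_{11}$ and the $(n-1)\times(n-1)$ lower-right block $X_{2:n,2:n}$ are free subject to $X\succeq 0$. The natural tool here is the Schur complement, which converts ``$X\succeq 0$ with controlled rank'' into a condition on the block $X_{2:n,2:n}$.

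The construction I would propose is the following. Fix a small parameter $\epsilon>0$, set $a:=X_{11}$ to be determined, and choose
\[
X_{1,2:n}=\tfrac{1}{2}\xbar_{2:n}^{\!T},\qquad X_{2:n,2:n}=\epsilon I_{n-1}+\tfrac{1}{4a}\xbar_{2:n}\xbar_{2:n}^{\!T}.
\]
Then the Schur complement of $X_{11}$ in $X$ is exactly $\epsilon I_{n-1}$, which is positive definite; combined with $a>0$ this yields $X\succ 0$, hence $\text{rank}(X)=n$, which is the desired conclusion. The remaining task is to pick $a$ so that the trace constraint $a+(n-1)\epsilon+\frac{\|\xbar_{2:n}\|^2}{4a}=\xbar_1$ holds. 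This is the quadratic $4a^2-4a\bigl(\xbar_1-(n-1)\epsilon\bigr)+\|\xbar_{2:n}\|^2=0$, whose discriminant is $16\bigl[(\xbar_1-(n-1)\epsilon)^2-\|\xbar_{2:n}\|^2\bigr]$.

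Here is where the interior hypothesis enters: since $\xbar\in\mathrm{int}(\Lcal^n)$ we have $\xbar_1>\|\xbar_{2:n}\|_2$, so by continuity $\xbar_1-(n-1)\epsilon>\|\xbar_{2:n}\|_2$ for all sufficiently small $\epsilon>0$. For such $\epsilon$ the discriminant is strictly positive, and the larger root
\[
a=\tfrac{1}{2}\Bigl(\xbar_1-(n-1)\epsilon+\sqrt{(\xbar_1-(n-1)\epsilon)^2-\|\xbar_{2:n}\|_2^2}\Bigr)
\]
is strictly positive, so everything is consistent. Finally, writing the spectral decomposition $X=\sum_{i=1}^n\lambda_i q^i(q^i)^T$ with all $\lambda_i>0$ and setting $\beta^i=\sqrt{\lambda_i}\,q^i$ yields the advertised form $\mathrm{DMR}^n(\xbar)=\sum_{i=1}^n\beta^i(\beta^i)^T$ with $n$ linearly independent summands, and admissibility (feasibility plus matching objective) is inherited directly from Theorem \ref{th:1} since all its linear conditions are satisfied by construction.

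I do not anticipate any serious obstacle; the only thing that requires care is to make the bound on $\epsilon$ depend on $\xbar$ so that the discriminant stays positive as $\xbar$ approaches the boundary of $\Lcal^n$. A uniform choice is not possible (and is not needed: the theorem asks for existence of a mapping, and one may take, e.g., $\epsilon(\xbar):=\frac{\xbar_1-\|\xbar_{2:n}\|_2}{2(n-1)}$, which is continuous and strictly positive on $\mathrm{int}(\Lcal^n)$).
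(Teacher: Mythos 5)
Your proof is correct, and it takes a genuinely different route from the paper's. The paper proves Theorem \ref{th:3} constructively via Algorithm \ref{alg:FRM}: it recursively splits $\xbar$ into pieces $\pi^k$, halves and $\epsilon$-perturbs them to get vectors $\tau^k$ that remain in the interior of $\Lcal^n$, takes the rank-one factor $\beta^k$ of each, and then proves the $n$ vectors $\beta^k$ are linearly independent by a fairly laborious row-elimination analysis of the matrix $\mathbb{B}$, together with a separate argument that a sufficiently small $\epsilon$ exists. Your argument replaces all of that with a single Schur-complement computation: prescribing $X_{2:n,2:n}=\epsilon I_{n-1}+\tfrac{1}{4a}\xbar_{2:n}\xbar_{2:n}^{T}$ makes the Schur complement of $X_{11}=a$ identically $\epsilon I_{n-1}\succ 0$, so positive definiteness (hence rank $n$) is immediate, and the only remaining work is the scalar quadratic in $a$ coming from the trace condition, whose discriminant is positive precisely because $\xbar_1>\|\xbar_{2:n}\|_2$. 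Admissibility then follows from Theorem \ref{th:1} exactly as you say, since only the first row/column and the trace of $X$ are constrained. Both proofs are parametrized by an $\epsilon$; yours is shorter and yields a closed form, and with the particular choice $\epsilon=\frac{\xbar_1-\|\xbar_{2:n}\|_2}{2(n-1)}$ your $a$ equals $\theta/4$ and your $X$ is exactly the map ${\rm MR}(\xbar)$ of Sim and Zhao, so it cleanly interpolates between the paper's rank-one map (at $\epsilon=0$) and the full-rank Sim--Zhao map. What the paper's more elaborate algorithmic construction buys in exchange is an explicit decomposition into $n$ rank-one summands built directly from $\xbar$ (rather than obtained a posteriori from a spectral decomposition), which the authors reuse to discuss generating maps of intermediate rank; your construction achieves the stated theorem with less machinery.
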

\begin{proof}
The proof of this theorem is presented in Appendix \ref{appsec:2}. 
\end{proof}
One can easily modify Algorithm~\ref{alg:FRM} (presented in proof of Theorem \ref{th:3}) to map a solution $\xbar \in {\rm int}(\Lcal^n)$ to a matrix ${\rm DMR}^{k}(\xbar)$
with rank $1\leq k\leq n$. This means that the primal feasible set of a SOCO can be mapped to different subsets of the SDO primal feasible region, e.g., rank-one mapping maps the primal feasible set of a SOCO to a one-dimensional face of the SDO primal feasible set.

Recall that Theorem 1 of \cite{sim2005note} proposes the mapping
$${\rm MR}(\xbar)=\begin{pmatrix}
\frac{1}{4}\theta&\frac{1}{2}\xbar_{2:n}^T\\
\frac{1}{2}\xbar_{2:n}&\frac{\xbar_1-\|\xbar_{2:n}\|}{2(n-1)}I+\frac{\xbar_{2:n}\xbar_{2:n}^T}{\theta}
\end{pmatrix},$$
where $\theta=\xbar_1+\|\xbar_{2:n}\|+\sqrt{(\xbar_1+\|\xbar_{2:n}\|)^2-4\|\xbar_{2:n}\|^2}$. They showed that this map is admissible. Moreover, it has full rank when $\xbar_1>\|\xbar_{2:n}\|$, i.e. $\xbar \in {\rm int}(\Lcal^n)$, and it is a rank-one matrix when $\xbar_1=\|\xbar_{2:n}\|$. This map also proves Theorem \ref{th:3}, while our proof follows a different approach. In our approach, we can generate different mappings and explore the feasible set of SDO by changing parameter $\epsilon$ in Algorithm~\ref{alg:FRM}. However, to prove that the set of all maps with different rank  produced by our approach can build the whole feasible set of the SDO representation is an ongoing research.  

Given the mapping of Sim and Zhao \cite{sim2005note}, consider the case in which the solution is on the boundary of the second-order cone, i.e. $\xbar_1 = \|\xbar_{2:n}\|$. Then, $\theta = 2\xbar_1$, and 
$${\rm MR}^1 (\xbar)=\begin{pmatrix}
\frac{1}{2}\xbar_1&\frac{1}{2}\xbar_{2:n}^T\\
\frac{1}{2}\xbar_{2:n}&\frac{\xbar_{2:n}\xbar_{2:n}^T}{2\xbar_1}
\end{pmatrix}.$$
We can see that this is exactly identical to the rank one mapping we presented in Theorem \ref{th:2}. We can write it as 
\begin{equation*}
    {\rm MR}^1 (\xbar)= \nu^1 (\nu^1)^T,
\end{equation*}
where 
\begin{equation*}
    \nu^1 = \frac{1}{\sqrt{\theta}} \left( \frac{1}{2} \theta, \xbar_{2}, \dots, \xbar_{n} \right)^T.
\end{equation*}
In order to construct ${\rm MR}(\xbar)$ as the sum of rank-one matrices, we define
\begin{equation*}
    \nu^j = \sqrt{\frac{\xbar_1 - \|\xbar_{2:n} \|}{2(n-1)}} \ e_j \qquad {\rm for } \ \ j =2,\dots,n,
\end{equation*}
where $e_j$ is a unit vector with 1 in element $j$. By this setting, we have 
$$
{\rm MR}(\xbar)= \sum_{j=1}^{n}\nu^j(\nu^j)^T.
$$ 
If the solution $\xbar$ of the SOCO primal problem is on the boundary of the cone, i.e. $\xbar_1 = \|\xbar_{2:n} \|$, then we get
\begin{align*}
    \nu^1 &= \frac{1}{\sqrt{2\xbar_1}} \left(\xbar_1, \xbar_{2}, \dots, \xbar_{n} \right)^T, \\
    \nu^j &= 0, \qquad \qquad \text{  for } j = 2,\dots, n.
\end{align*}
This results in the rank one mapping ${\rm MR}^1 (\xbar)$. On the other hand, if the solution is in the interior of the cone, i.e. $\xbar_1 > \|\xbar_{2:n} \|$, then $\nu^j \neq 0$ for $j = 2,\dots, n$, and we can construct the full rank mapping ${\rm MR}(\xbar)$. Note that one cannot take a combination of first $k$ vectors $\nu^j$ to construct a rank-$k$ mapping as it would \textit{not} be admissible since it violates the conditions given in Theorem \ref{th:1}, i.e. sum of diagonals will not be equal to $\xbar_1$. To overcome this issue and construct a rank-$k$ mapping, let $\Ncal \subseteq \{2,\dots,n\}$ with $|\Ncal| = k$. One can take the following definition of $\nu^j$,
\begin{equation*}
    \nu^j = \sqrt{\frac{\xbar_1 - \|\xbar_{2:n} \|}{2(k-1)}} \ e_j \qquad {\rm for } \ \ j \in \Ncal, \ {\rm and } \ \nu^j = 0 \ {\rm for } \ \ j \notin \Ncal.
\end{equation*}
The resulting matrix ${\rm MR}^k(\xbar)$ has rank $k$, and one can easily see that it satisfies the conditions of Theorem \ref{th:1}.
Considering $k = n$, this choice of $\nu^j$ results in identical mapping to ${\rm MR}(\xbar)$.
Next theorem shows that we can have mappings with different ranks when we are mapping a solution from interior of the Lorentz cone.
\begin{theorem}\label{th:4}
Let $\rho(\xbar)=\max\{\rank (\Mcal(\xbar)): \Mcal\text{ is admissible map} \}$. We have 
\begin{itemize}
    \item $\rho(\xbar)=n$ if $\xbar\in {\rm int}(\Lcal^n).$
    \item $\rho(\xbar)=1$ if $\xbar\in \partial(\Lcal^n).$
\end{itemize}
\end{theorem}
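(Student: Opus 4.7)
The plan is to handle the two cases separately. The upper bound $\rho(\xbar) \le n$ in the interior case and the achievability of the claimed ranks follow from constructions already presented; the real content is the upper bound $\rho(\xbar) \le 1$ on the boundary.

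For the interior case, $\rho(\xbar) \le n$ is immediate since $\Mcal(\xbar)$ is an $n \times n$ matrix. The matching lower bound is exactly the content of Theorem \ref{th:3}: the construction in Algorithm \ref{alg:FRM} produces an admissible map whose image at $\xbar \in \mathrm{int}(\Lcal^n)$ has rank $n$.

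For the boundary case, assume first $\xbar \neq 0$ (if $\xbar = 0$, then $\trace(X)=0$ with $X\succeq 0$ forces $X=0$, a trivially rank-zero image, matching the rank-one mapping of Theorem \ref{th:2} up to degeneracy). Write any admissible image in block form
\[
X = \begin{pmatrix} X_{11} & v^T \\ v & Y \end{pmatrix}, \qquad v = \tfrac{1}{2}\xbar_{2:n},
\]
where by Theorem \ref{th:1} we must have $X_{11} + \trace(Y) = \xbar_1$ and $X \succeq 0$. If $X_{11}=0$, then $X\succeq 0$ forces the entire first row of $X$ to vanish, so $\xbar_{2:n}=0$ and $\xbar=0$, contradicting our assumption. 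Hence $X_{11} > 0$, and the Schur complement applies: $Y - vv^T/X_{11} \succeq 0$. Taking traces yields
\[
X_{11}(\xbar_1 - X_{11}) \;\ge\; \|v\|_2^2 \;=\; \tfrac{1}{4}\|\xbar_{2:n}\|_2^2 .
\]
The boundary condition $\|\xbar_{2:n}\|_2 = \xbar_1$ then reduces this quadratic inequality to $(X_{11}-\xbar_1/2)^2 \le 0$, which forces $X_{11} = \xbar_1/2$.

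Substituting back, $Y - (2/\xbar_1)\,vv^T \succeq 0$, but a direct trace computation gives $\trace(Y) - (2/\xbar_1)\|v\|_2^2 = \xbar_1/2 - \xbar_1/2 = 0$. A positive semidefinite matrix with zero trace is the zero matrix, so $Y = (2/\xbar_1)\,vv^T$, and hence $X$ is uniquely determined. In fact $X = uu^T$ with $u = \bigl(\sqrt{\xbar_1/2},\, \sqrt{2/\xbar_1}\,v\bigr)^T$, so $\rank(X) = 1$. Achievability of rank one is Theorem \ref{th:2}, which concludes $\rho(\xbar) = 1$ on the boundary.

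The main obstacle will be the uniqueness argument in the boundary case: producing a rank-one admissible image is a direct application of Theorem \ref{th:2}, but showing that \emph{nothing of higher rank is possible} requires bootstrapping the generalized Schur inequality into exact equality, which is what the trace-zero deduction achieves once the boundary condition pins down $X_{11}$.
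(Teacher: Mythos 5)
Your proof is correct, and it is substantially more self-contained than what the paper provides: the paper's entire proof of Theorem~\ref{th:4} is the single sentence that it is ``similar to Theorem 1 of \cite{sim2005note}'', i.e., it defers to the Sim--Zhao map ${\rm MR}$, which is full rank in the interior and rank one on the boundary, and to their argument that nothing of higher rank is admissible on the boundary. You instead argue directly: the interior case is delegated to Theorem~\ref{th:3} exactly as the paper intends, and for the boundary case you use the Schur complement plus a trace computation to show that the admissible image is in fact \emph{unique} and equals the rank-one matrix ${\rm DMR}^{1}(\xbar)$ --- a strictly stronger conclusion than $\rho(\xbar)=1$, and one that makes the ``no higher rank is possible'' direction fully explicit rather than cited. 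Two small caveats. First, you (like the paper) implicitly identify ``admissible map'' with ``map satisfying the conditions of Theorem~\ref{th:1}''; this is justified only if admissibility is required uniformly over the problem data, since for a single fixed instance the identity $c^T\xbar=\trace{(\Cvec X)}$ together with $\trace{(\Avec_i X)}=b_i$ does not by itself pin down $\sum_i X_{ii}$ and the entries $X_{1j}$. The paper makes the same tacit assumption, so your usage is consistent with it, but it deserves a sentence. Second, your parenthetical on $\xbar=0$ is the honest observation that there the only admissible image is $X=0$, so $\rho(0)=0$ rather than $1$; the theorem statement (and the paper) glosses over this degenerate boundary point, and your remark quietly corrects it.
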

\begin{proof}
Proof of this theorem is similar to Theorem 1 of \cite{sim2005note}.
\end{proof}
In the next section, we generalize our result for the case there are multiple Lorentz cones.
\subsection{Generalization to Multiple SOCs} \label{ss:dsgen}
In this section, we extend our rank-one mapping to the case with multiple second-order cones. We can use similar conditions as in Theorem \ref{th:1} to extend our mapping to the case of multiple second-order cones. Although, instead of the ``Arw" operator, we need to introduce a new operator called ``DArw" which constructs a block diagonal matrix with arrow-head matrices of input vectors. First, we transform the objective coefficient vectors and coefficient matrices into proper block-diagonal structure as
\begin{align*}
    \Ctilde = \text{DArw}(c^1, c^2, ..., c^r) = 
    \begin{bmatrix}
    \Cvec^1 &           &        &            \\
            & \Cvec^2   &        &            \\
            &           & \ddots &            \\
            &           &        & \Cvec^r
    \end{bmatrix},
\end{align*}
where $\Cvec^i$  is the arrow-head matrix corresponding to vector $c^i$ for all $i = 1,...,r$. Moreover, we define a similar block diagonal matrix 
\begin{align*}
    \Atilde_j = \text{DArw}(A_j^1, A_j^2, ..., A_j^r) = 
    \begin{bmatrix}
    \Avec_j^1 &             &        &             \\
                & \Avec_j^2 &        &             \\
                &             & \ddots &             \\
                &             &        & \Avec_j^r
    \end{bmatrix} \qquad \qquad \text{ for all } j= 1,...,m,
\end{align*}
where $A_j^i$ corresponds to the row $j$ of matrix $A^i$ for all $i = 1,...,r$. Now, the SDO representation of the SOCO problem can be derived as (\ref{mo:gensdop}) and (\ref{mo:gensdod}). Compared to the SDO representation of \cite{sim2005note}, \eqref{mo:szprimal} and \eqref{mo:szdual}, our representation is a standard SDO over one cone of positive semidefinite matrices of dimension $n\times n$. Given the introduced notations $\Ctilde$ and $\Atilde$, we have 
\begin{equation} \label{mo:gendualp} \tag{$\Pcal_{SDO}^{\Dtilde}$}
    \begin{aligned}
     z_{\Pcal_{SDO}^{\Dtilde}}^* := \min  \ & \trace{(\Ctilde \Xtilde)} \\
     \text{ s.t.} \ & \trace{(\Atilde_i \Xtilde)} = b_i && \text{ for all } i = 1,...,m, \\
     \ & \quad \Xtilde \succeq 0, 
    \end{aligned}
\end{equation}
and the dual problem is
\begin{equation} \label{mo:genduald} \tag{$\Dcal_{SDO}^{\Dtilde}$} 
    \begin{aligned}
    z_{\Dcal_{SDO}^{\Dtilde}}^* := \max \ &  b^T y \\
    \text{ s.t.} \ & \sum_{i=1}^m y_i \Atilde_i + \Stilde = \Ctilde, \\
    \ & \Stilde \succeq 0.
    \end{aligned}
\end{equation}
Analogously, we can define the sets of feasible and optimal solutions for the problems \eqref{mo:gendualp} and \eqref{mo:genduald}.

We can present the following theorem to specify admissible mappings for the general form. 
\begin{theorem} \label{th:5}
Consider the SOCO problem pairs (\ref{mo:gensocop}) and (\ref{mo:gensocod}) with 
\begin{equation*}
    (\xbar^1;\xbar^2;...;\xbar^r;\ybar;\sbar^1;\sbar^2;...;\sbar^r) \in \Fcal_{\Pcal_{SOCO}} \times \Fcal_{\Dcal_{SOCO}},
\end{equation*}
and SDO problem pairs \eqref{mo:gendualp} and \eqref{mo:genduald} with $(\Xtilde,\ytilde,\Stilde) \in \Fcal_{\Pcal_{SDO}^{\Dtilde}} \times \Fcal_{\Dcal_{SDO}^{\Dtilde}}$. Then, the following mapping, denoted by $(\Xtilde,\ytilde,\Stilde) = \Mcal(\xbar^1;\xbar^2;...;\xbar^r;\ybar;\sbar^1;\sbar^2;...;\sbar^r)$, with
\begin{align*}
    \Stilde &= \text{DArw}(\sbar^1;...;\sbar^r), \\
    \ytilde &= \ybar, \qquad \\
    \Xtilde &
    \succeq 0 \quad {\rm s.t.} \quad 
    \begin{bmatrix}
    \sum_{j=u_i}^{u_i+n_i} \Xtilde_{jj}  \\
    \Xtilde_{u_i,u_i+1}                  \\
    \vdots                               \\
    \Xtilde_{u_i,u_i+n_i}            
    \end{bmatrix} 
    = 
    \begin{bmatrix}
    \xbar_1^i\\
    \frac{\xbar_2^i}{2}\\
    \vdots\\
    \frac{\xbar_n^i}{2}
    \end{bmatrix},  \text{ for all i = 1,...,r}
\end{align*}
coupled with the inverse mapping denoted by $(\xbar^1;\xbar^2;...;\xbar^r;\ybar;\sbar^1;\sbar^2;...;\sbar^r) = \Mcal^{-1}(\Xtilde,\ytilde,\Stilde)$ with 
\begin{align*}
    (\sbar^1;...;\sbar^r) = \text{DArw}^{-1}(\Stilde), \qquad
    \ybar = \ytilde, \qquad
    (\xbar^1;...;\xbar^r) = 
    \begin{bmatrix}
    \sum_{j=u_i}^{u_i+n_i} \Xtilde_{jj}  \\
    2\Xtilde_{u_i,u_i+1}                                                       \\
    \vdots                                                          \\
    2\Xtilde_{u_i,u_i +n_i}            
    \end{bmatrix}  ,
\end{align*}
where $u_i = 1+\sum_{k=0}^{i-1} n_{i}$ and $n_0 = 0$, is an admissible mapping.
\end{theorem}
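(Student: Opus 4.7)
The plan is to reduce everything to the single-cone case already handled by Theorem \ref{th:1}, exploiting the block-diagonal structure of $\widetilde{C}$, the $\widetilde{A}_i$'s, and $\widetilde{S}$. Since these matrices are block diagonal with the $j$-th diagonal block corresponding to the $j$-th Lorentz cone, the SDO constraints decouple cleanly across cones, so each block reduces to an instance of Theorem \ref{th:1}.

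I would first dispatch the dual side. As $\widetilde{S} = \mathrm{DArw}(\bar{s}^1,\ldots,\bar{s}^r)$ is block diagonal with each block being an arrow-head matrix, the equivalence \eqref{eq:arw} applied block-by-block gives $\widetilde{S}\succeq 0$ if and only if $\bar{s}^i\in\mathcal{L}^{n_i}$ for every $i$. The linear equation $\sum_{i=1}^m y_i\widetilde{A}_i + \widetilde{S} = \widetilde{C}$ splits into $r$ independent arrow-head identities, and since $\mathrm{Arw}$ is injective on vectors, each block equation is equivalent to $(A^j)^\top \bar{y} + \bar{s}^j = c^j$. This establishes dual feasibility in both directions and gives $b^\top\widetilde{y}=b^\top\bar{y}$.

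For the primal side, I would use that $\widetilde{A}_i$ is block diagonal to observe that $\mathrm{tr}(\widetilde{A}_i\widetilde{X})$ depends only on the $r$ diagonal blocks $\widetilde{X}^{jj}$ of $\widetilde{X}$, namely $\mathrm{tr}(\widetilde{A}_i\widetilde{X}) = \sum_{j=1}^r \mathrm{tr}(\mathrm{Arw}(a_{(i)}^j)\,\widetilde{X}^{jj})$. The conditions imposed on the $(u_i, u_i+1,\ldots,u_i+n_i)$ entries of $\widetilde{X}$ are exactly the single-cone conditions of Theorem \ref{th:1} applied to the block $\widetilde{X}^{jj}$, so by that theorem $\mathrm{tr}(\mathrm{Arw}(a_{(i)}^j)\,\widetilde{X}^{jj}) = (a_{(i)}^j)^\top \bar{x}^j$. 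Summing in $j$ yields $\mathrm{tr}(\widetilde{A}_i\widetilde{X}) = b_i$, and the analogous computation with $\widetilde{C}$ gives $\mathrm{tr}(\widetilde{C}\widetilde{X}) = \sum_j (c^j)^\top \bar{x}^j = c^\top \bar{x}$, which takes care of primal feasibility and of objective preservation in one go.

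The only delicate step is the positive semidefiniteness transfer. In the forward direction one may simply choose $\widetilde{X}$ to be block diagonal, with each block $\widetilde{X}^{jj}$ built from $\bar{x}^j$ via Theorem \ref{th:1} (or via the rank-one construction of Theorem \ref{th:2}); positive semidefiniteness of $\widetilde{X}$ then follows from that of its blocks. The freedom in choosing the off-diagonal blocks (subject to $\widetilde{X}\succeq 0$) is precisely what makes the forward map point-to-set. In the inverse direction, the principal submatrix fact gives $\widetilde{X}^{jj}\succeq 0$ for each $j$, whence Theorem \ref{th:1} applied block-wise yields $\bar{x}^j\in\mathcal{L}^{n_j}$. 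The main bookkeeping obstacle is simply tracking the index offsets $u_i = 1+\sum_{k=0}^{i-1}n_k$ correctly when restricting $\widetilde{X}$ to its $j$-th block; once the notation is set up, no new ideas beyond Theorem \ref{th:1} are required.
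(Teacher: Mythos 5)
Your proposal is correct and takes essentially the same approach as the paper: the paper's proof of Theorem~\ref{th:5} is simply the declaration that it is analogous to Theorem~\ref{th:1}, and your block-by-block reduction --- decoupling the block-diagonal data so that each Lorentz cone yields an independent instance of the single-cone argument, with the diagonal blocks of the primal matrix inheriting positive semidefiniteness as principal submatrices and the off-diagonal freedom accounting for the point-to-set nature of the map --- is exactly that analogy made explicit.
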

\begin{proof}
The proof of this theorem is analogous to that of Theorem \ref{th:1}.
\end{proof}
\begin{remark}
    Note that a solution of (\ref{mo:gendualp}) is not necessarily block-diagonal.
\end{remark}

Using the conditions in Theorem \ref{th:4}, we can write
\begin{align*}
    \Stilde &= \text{DArw}(\sbar^1, \sbar^2, ..., \sbar^r)
    = 
    \begin{bmatrix}
    \arw(\sbar^1) &                     &        &                     \\
                        & \arw(\sbar^2) &        &                     \\
                        &                     & \ddots &                     \\
                        &                     &        & \arw(\sbar^r)
    \end{bmatrix}, \quad \text{ and, }\\
    \ytilde &= \ybar.
\end{align*}
We can define matrix $\Xtilde$ using our rank one mapping with vector $\Tilde{\beta}$. Vector $\Tilde{\beta}$ consists of $n$ elements partitioned into $r$ sub-vectors each corresponds to the cones in the problem. Thus, we have
\begin{equation*}
    \Xtilde = \Tilde{\beta} \Tilde{\beta}^T,
\end{equation*}
where
\begin{align*}
    \Tilde{\beta} = ((\beta^1)^T, (\beta^2)^T, ..., (\beta^r)^T)^T.
\end{align*}
One can calculate vector $\Tilde{\beta}$ in the closed form as follows,
\begin{align*}
    \beta^i = \frac{1}{\sqrt{2(\xbar^i_1 + \delta^i)}}\left( \xbar^i_1 + \delta^i, \xbar^i_2, \dots, \xbar^i_{n_i} \right),
\end{align*}
where $\delta^i = \sqrt{(\xbar^i_1)^2 - ||\xbar^i_{2:n}||^2}$.
 We can also have a separate rank-one mapping for each Lorentz cone as $\Xtilde = \sum_{i=1}^{r}(\beta)_i (\beta)_i^T$, where
\begin{align*}
(\beta^j)_i =
    \begin{cases}
        \frac{1}{\sqrt{2(\xbar^i_1 + \delta^i)}}\left( \xbar^i_1 + \delta^i, \xbar^i_2, \dots, \xbar^i_{n_i} \right)&\text{ if } j=i,\\
        0 &\text{ otherwise}.
    \end{cases}
\end{align*}
In this case, $\Xtilde$ is a block diagonal matrix with rank equal to $r$. If we choose $(\beta^j)_i \in \mathbb{R}^{n_j}$ for $j\in\{1,\dots,r\}/\{i\}$, then $\Xtilde = \sum_{i=1}^{r}(\beta)_i (\beta)_i^T$ will be a positive semidefinite matrix, which is not necessarily block diagonal. Since all the input matrices are block diagonal, it is straightforward to check that the mapping will remain admissible. For any Lorentz cone $i$ with $\xbar_1^i>\|\xbar_{2:n}^i\|$, the corresponding part of the matrix $\Xtilde $ can have any rank from $1$ to $n_i$ using higher rank mappings proposed in the previous section. For instance, if the solution $\xbar$ is in the interior of all Lorentz cones, i.e. $\xbar_1^i>\|\xbar_{2:n}^i\|$ for $i=1,\dots,r$, then we can find $\Xtilde = \sum_{i=1}^{r}\sum_{k=1}^{n_r}(\beta)_i^k (\beta)_i^k{}^T$ whose rank is $n\times r$. In this case, it is straightforward to produce mapping with arbitrary rank form $1$ to $n\times r$ by using Algorithm~\ref{alg:FRM} for filling diagonal blocks and fill other parts with arbitrary numbers.

\section{From SOCO to SDO: Starting from the Primal Side} \label{sec:4}
In this section, we analyze the case when we start to reformulate the standard primal SOCO as an equivalent SDO problem, which requires more complex reformulation and was left untouched by Sim and Zhao \cite{sim2005note}. The purpose is to investigate the SOCO-SDO relationship starting from the primal side and answer the following questions.
\begin{enumerate}
    \item What happens if we force matrix $X$ to be arrow-head?
    \item Does this derivation results in similar mappings between the SOCO problems and their SDO counterparts?
\end{enumerate}
\subsection{Derivation and Solution Mapping}
Recall SOCO problems \eqref{mo:socop} and \eqref{mo:socod} and their corresponding feasible and optimal solution sets from Section \ref{sec:prelim}. Recall that we have $\xbar \in \Lcal^n$ if and only if $X=\arw(\xbar) \succeq 0$, thus starting from \eqref{mo:socop} requires a different choice of $\Cvec$ and $\Avec_i$ in order to properly represent the primal constraint of SOCO in an equivalent SDO reformulation. Thus, we define
\begin{align*}
    \Vec{\mathtt{C}} &= 
    \arw(\frac{1}{n} c_1, \frac{1}{2}c_2, ..., \frac{1}{n} c_n),\\
    \Vec{\mathtt{A}}_i &= 
    \arw(\frac{1}{n}a_{i1}, \frac{1}{2}a_{i2}, ..., \frac{1}{2}a_{in}), && i = 1,2, ..., m.
\end{align*}
which leads to preserving $a_{(i)}^T \xbar = \trace(\Vec{\mathtt{A}}_i X) = b_i, \ i=1,\dots,m$, where $X = \arw(\xbar)$. Using the just introduced arrow-head representations of $c$, $a_{(i)}$, and $\xbar$, we write the following SDO problem.
\begin{align*}
    \min \left\{\trace(\Vec{\mathtt{C}}  X) \ : \ \text{Tr}(\Vec{\mathtt{A}}_i  X)  = b_i, \quad i=1,...,m, \quad X \succeq 0 \right\}.
\end{align*}
However, in this SDO problem $X$ is a semidefinite matrix without arrow-head structure. Thus, in order to represent \eqref{mo:socop} using this SDO model, we need to enforce the arrow-head structure on matrix $X$. In other words, we need to translate the arrow-head structure requirement into linear constraints. Thus, we introduce symmetric matrices $\Check{\mathtt{A}}_{hl}$ for $2\leq h < l \leq n$ with $(\Check{\mathtt{A}}_{hl})_{hl} = (\Check{\mathtt{A}}_{hl})_{lh} = 1$ for $2\leq h<l\leq n$, and all other entries are zero. Moreover, we introduce matrix $\Hat{\mathtt{A}}_k$, with $(\Hat{\mathtt{A}}_k)_{11} = 1, (\Hat{\mathtt{A}}_k)_{kk} = -1$ for $k=2,...,n$, and rest of the entries are zero. Therefore, we write the following model
\begin{equation} \label{mo:2dsdop} \tag{$\Pcal^{P}_{SDO}$}
    \begin{aligned}
    \min        \   & \trace{(\Vec{\mathtt{C}} X)}\\
    \text{  s.t. }  & \trace{(\Vec{\mathtt{A}}_i  X)}  = b_i, && i = 1,...,m \\
                    & \trace{(\Check{\mathtt{A}}_{hl} X)}  =  0 ,  && h = 2, ..., n-1, \ h < l \leq n\\
                    & \trace{(\Hat{\mathtt{A}}_k    X)}  =  0 , && k = 2,...,n \\
                    & \ X\succeq 0, 
    \end{aligned}
\end{equation}
which is an accurate SDO representation of \eqref{mo:socop} as the arrow-head structure of the matrix $X$ is enforced by the linear constraints. Let $z_{\Pcal^{P}_{SDO}}^*$ denote the optimal objective function value of \eqref{mo:2dsdop}, and 
\begin{align*}
    \Fcal_{\Pcal^{P}_{SDO}} = \{ X \in \Smbb^n : &\trace{(\Vec{\mathtt{A}}_i  X)}  = b_i, i = 1,...,m,\\ &\trace{(\Check{\mathtt{A}}_{hl} X)}  =  0, h = 2, ..., n, \ h < l,\\  &\trace{(\Hat{\mathtt{A}}_k    X)}  =  0, k = 2,...,n,\\
    & \ X \succeq 0\},
\end{align*}
and
\begin{align*}
    {\Pcal^{P}_{SDO}}^* &= \{ X \in \Fcal_{\Pcal^{P}_{SDO}} : \trace{(\Vec{\mathtt{C}} X)} = z_{\Pcal^{P}_{SDO}}^*\},
\end{align*}
be the feasible and optimal solution sets of problem \eqref{mo:2dsdop}, respectively. 
Next, we present the dual model of the SDO problem \eqref{mo:2dsdop} as
\begin{equation} \label{mo:2dsdod} \tag{$\Dcal^{P}_{SDO}$}
    \begin{aligned}  
    \max   \        &     b^T v\\
    \text{  s.t.  } & \sum_{i=1}^m v_i \ \Vec{\mathtt{A}}_i + \sum_{h\neq 1, h < l} w_{hl} \Check{\mathtt{A}}_{hl} + \sum_{k=2}^n u_k \Hat{\mathtt{A}}_k + S = \Vec{\mathtt{C}},\\
                    & \ S\succeq0,
    \end{aligned}
\end{equation}
where $w_{hl}$ denotes the dual variable corresponding to the matrix dedicated for setting entries $(h,l)$ and $(l,h)$ in $X$ equal to zero, and $u_k$ corresponds to the linear constraints that are setting elements on the diagonal of each block equal to each other for that specific block. \\
Similarly, let $z_{\Dcal^{P}_{SDO}}^*$ denote the optimal objective function value for the dual model \eqref{mo:2dsdod}, and 
\small
\begin{align*}
    \Fcal_{\Dcal^{P}_{SDO}} = \bigg\{&(v,w,u,S) \in \Rmbb^m \times \Rmbb^{\frac{(n-1)(n-2)}{2}} \times \Rmbb^{(n-1)} \times \Smbb^n :
    \sum_{i=1}^m v_i \ \Vec{\mathtt{A}}_i + \sum_{h\neq 1, h < l} w_{hl} \Check{\mathtt{A}}_{hl} + \sum_{k=2}^n u_k \Hat{\mathtt{A}}_k + S = \Vec{\mathtt{C}}, \  S \succeq 0 \bigg\}
\end{align*}
\normalsize
and
\begin{align*}
    {\Dcal^{P}_{SDO}}^* &= \{(v,w,u,S) \in \Fcal_{\Dcal^{P}_{SDO}}: b^T v = z_{\Dcal^{P}_{SDO}}^*\},
\end{align*}
be the feasible and optimal solution sets of problem \eqref{mo:2dsdod}, respectively. 

Let's analyze the structure of $S$. First, recall problem \eqref{mo:socod}, where we have
\begin{equation*}
    \sbar_j = {c}_{j} - \sum_{i=1}^m \ybar_i {a_{ij}}, \qquad \text{ for all } j = 1,...,n.
\end{equation*}
Considering that, we seek to preserve the objective function value throughout the mapping, we have $v = \ybar$. Therefore, using the non-zero structure of the matrices $\Check{\mathtt{A}}_{hl}$ and $\Hat{\mathtt{A}}_k$, we have
\begin{align} \label{eq:derivedS}
    S &= 
    \begin{bmatrix}
    \frac{1}{n}(\sbar_1) - \sum_{k=2}^n u_k      &  \frac{1}{2}(\sbar_2)      &   \hdots                    & \hdots                  & \frac{1}{2}(\sbar_n)\\
    \frac{1}{2}(\sbar_2)                         &  \frac{1}{n}(\sbar_1)+u_2  &  -w_{23}           & \hdots                  &  -w_{2n} \\
    \vdots                                        &  -w_{23}           &  \frac{1}{n}(\sbar_1)+u_3  & \ddots                  &  \vdots   \\
    \vdots                                        &   \vdots                    &   \ddots                    & \ddots                  &  -w_{(n-1)n} \\ 
    \frac{1}{2}(\sbar_n)                         &  -w_{2n}           &   \hdots                    & -w_{(n-1)n} & \frac{1}{n}(\sbar_1)+u_n \\
    \end{bmatrix}.
\end{align}
Furthermore, we have free variables $w$ and $u$ which can take values such that $S$ is positive semidefinite. Now, let $y = (v;w;u)$, and $\bbar = [b;\textbf{0}_{\frac{(n-1)(n-2)}{2} \times 1}; \textbf{0}_{n-1 \times 1}]$. Then, we can present the following theorem which presents a point to set admissible mapping, see Definition \ref{def:admismap}, with $r=1$, based on \eqref{mo:socop} and \eqref{mo:socod}, and their representations \eqref{mo:2dsdop} and \eqref{mo:2dsdod}. 

\begin{theorem} \label{th:6}
Consider the SOCO problem pairs (\ref{mo:socop}) and (\ref{mo:socod}) with $(\xbar, \ybar, \sbar) \in \Fcal_{\Pcal^1_{SOCO}} \times \Fcal_{\Dcal^1_{SOCO}}$ and SDO problem pairs (\ref{mo:2dsdop}) and (\ref{mo:2dsdod}) with $(X,v,w,u,S) \in \Fcal_{\Pcal^{P}_{SDO}} \times \Fcal_{\Dcal^{P}_{SDO}}$. Then, the mapping $(X,v,w,u,S) = \Mcal(\xbar,\ybar,\sbar)$ defined as
\small
\begin{align*}
    X &= \arw(\xbar), \\
    v &= \ybar, \\
    S &=     
    \begin{bmatrix}
    \frac{\sbar_1}{n} - \sum_{k=2}^n u_k      &  \frac{\sbar_2}{2}     &   \hdots                    & \hdots                  & \frac{\sbar_n}{2}\\
    \frac{\sbar_2}{2}                       &  \frac{\sbar_1}{n}+u_2  &  -w_{23}           & \hdots                  &  -w_{2n} \\
    \vdots                                        &  -w_{23}           &  \frac{\sbar_1}{n}+u_3  & \ddots                  &  \vdots   \\
    \vdots                                        &   \vdots                    &   \ddots                    & \ddots                  &  -w_{(n-1)n} \\ 
    \frac{\sbar_n}{2}                        &  -w_{2n}           &   \hdots                    & -w_{(n-1)n} & \frac{\sbar_1}{n}+u_n \\
    \end{bmatrix},
\end{align*}
\normalsize
where $w \in \Rmbb^{\frac{(n-1)(n-2)}{2}}$ and $u \in \Rmbb^{(n-1)}$ taking any values such that $S \succeq 0$ is a point-to-set admissible mapping. The inverse mapping is denoted by $(\xbar,\ybar,\sbar) = \Mcal^{-1}(X,v,w,u,S)$, with
\begin{align*}
    \xbar = \arw^{-1}(X), \qquad
    \ybar = v, \qquad
    \sbar = \begin{bmatrix}
    c_1 - \sum_{i=1}^{m} v_i a_{i1}\\
    \vdots \\
    c_n - \sum_{i=1}^{m} v_i a_{in}
    \end{bmatrix}.
\end{align*}
\end{theorem}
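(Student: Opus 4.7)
The plan is to verify the four conditions of admissibility from Definition \ref{def:admismap} by exploiting the arrow-head structure of the matrices in play, entry-by-entry. The forward primal direction is almost automatic: \eqref{eq:arw} gives $X = \arw(\xbar) \succeq 0$, and the arrow-head structure forces $X_{hl} = 0$ for $2 \leq h < l \leq n$ and $X_{kk} = X_{11}$, which are precisely the linear constraints encoded by $\Check{\mathtt{A}}_{hl}$ and $\Hat{\mathtt{A}}_k$. A direct trace calculation, in which the factors $1/n$ on the diagonal and $1/2$ on the first row of $\Vec{\mathtt{A}}_i$ combine with the $n$ copies of $\xbar_1$ on the diagonal of $X$ together with the doubling from the symmetric off-diagonal, gives $\trace(\Vec{\mathtt{A}}_i X) = a_{(i)}^T \xbar = b_i$ and $\trace(\Vec{\mathtt{C}} X) = c^T \xbar$; this settles primal feasibility and primal objective preservation. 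The dual objective identity $\bbar^T y = b^T \ybar$ holds by inspection since $\bbar$ augments $b$ with zeros and $v = \ybar$.

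For dual feasibility in the forward direction, setting $v = \ybar$ and using $\sbar = c - A^T \ybar$, I would read off the constraint $\sum_i v_i \Vec{\mathtt{A}}_i + \sum_{h<l} w_{hl} \Check{\mathtt{A}}_{hl} + \sum_k u_k \Hat{\mathtt{A}}_k + S = \Vec{\mathtt{C}}$ entry-by-entry at positions $(1,1)$, $(1,j)$, $(k,k)$, and $(h,l)$; this reproduces exactly the matrix $S$ stated in the theorem, in agreement with \eqref{eq:derivedS}. The main obstacle is to show that the set of $(w,u)$ making $S \succeq 0$ is nonempty. I would resolve this by invoking the rank-one mapping of Theorem \ref{th:2}: the matrix ${\rm DMR}^1(\sbar)$ is PSD as an outer product $\beta\beta^T$, its first row equals $(\sbar_j/2)_{j\geq 2}$ matching the prescribed entries $S_{1j}$, and a direct computation using $\|\sbar_{2:n}\|^2 = \sbar_1^2 - \delta^2$ shows $\trace({\rm DMR}^1(\sbar)) = \sbar_1 = S_{11} + \sum_{k\geq 2} S_{kk}$. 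Reading off $u_k$ from the diagonal and $-w_{hl}$ from the off-diagonal of ${\rm DMR}^1(\sbar)$ then produces a valid feasible choice (with the degenerate case $\sbar = 0$ handled by ${\rm DMR}^1(0) = 0$), so the forward image is nonempty.

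For the inverse mapping, the constraints $\trace(\Check{\mathtt{A}}_{hl} X) = 0$ and $\trace(\Hat{\mathtt{A}}_k X) = 0$ force $X$ to have arrow-head structure; combined with $X \succeq 0$, \eqref{eq:arw} yields $\xbar := \arw^{-1}(X) \in \Lcal^n$, and $\trace(\Vec{\mathtt{A}}_i X) = b_i$ reduces to $a_{(i)}^T \xbar = b_i$. For the dual side, the definitions $\ybar := v$ and $\sbar_j := c_j - \sum_i v_i a_{ij}$ satisfy the SOCO dual equation by construction; the nontrivial step is showing $\sbar \in \Lcal^n$. The derived form of $S$ gives $\sbar_1 = \trace(S)$ and $\sbar_j = 2 S_{1j}$ for $j \geq 2$, while each $2\times 2$ principal minor $\bigl(\begin{smallmatrix} S_{11} & S_{1j} \\ S_{1j} & S_{jj} \end{smallmatrix}\bigr) \succeq 0$ yields $S_{1j}^2 \leq S_{11} S_{jj}$. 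Summing over $j \geq 2$ and applying AM-GM, I obtain $\tfrac{1}{4}\|\sbar_{2:n}\|^2 = \sum_{j\geq 2} S_{1j}^2 \leq S_{11}(\trace(S) - S_{11}) \leq \tfrac{1}{4}\trace(S)^2$, hence $\|\sbar_{2:n}\| \leq \sbar_1$, completing the admissibility verification.
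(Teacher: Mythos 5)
Your proof is correct and follows essentially the same route as the paper's own proof in Appendix C: an entrywise verification of primal and dual feasibility and of objective preservation in both directions, together with the $2\times 2$ principal-minor bound $|S_{1j}|\le\sqrt{S_{11}S_{jj}}$ combined with the arithmetic--geometric mean inequality to recover $\sbar\in\Lcal^n$ from $S\succeq 0$ in the inverse direction. The one point where you go beyond the paper is in explicitly exhibiting a feasible choice of $(w,u)$ via the rank-one matrix $\eta\eta^T$ (checking its first row and trace) to show that the forward image is nonempty, whereas the paper only asserts ``by definition and using $u$ and $w$ in construction of $S$, we have $S\succeq 0$''; this is a genuine and welcome tightening of that step.
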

\begin{proof}
The proof of this theorem is presented in Appendix \ref{appsec:3}.
\end{proof}
\begin{remark}
The proposed mapping in Theorem \ref{th:6} is not necessarily a point to point mapping regarding dual variables of \eqref{mo:socod} and \eqref{mo:2dsdod}.
\end{remark}
Analogous to Corollaries \ref{col:d-obj}-\ref{col:d-comp}, the proposed mapping in Theorem \ref{th:6} preserves optimality and complementarity.
\subsection{Rank-one Mapping}
Using the intuition from the dual side's section, we seek to represent matrix $S$ using a rank one matrix. Hence, we propose
\begin{align*}
    S = \eta (\eta)^T,
\end{align*}
with
\begin{align*}
    \sum_{i=1}^n (\eta_{i})^2 &=  \sbar_1,\\
    \eta_1 \eta_j &= \frac{\sbar_j}{2} \text{  for all } j = 2, ..., n.
\end{align*}
We need to solve an $n$-variable-$n$-equation system. The solution of this system is
\begin{align*}
    \eta &= \frac{1}{\sqrt{2(\sbar_1 + \delta')}}\big(\sbar_1 + \delta', \sbar_2, ..., \sbar_n \big)^T,
\end{align*}
where $\delta' = \sqrt{\sbar_1 - ||\sbar_{2:n}||^2}$. Using this vector, we can construct matrix $S$. Thus, we can compute $u_k$ for $k=2,...,n$, and $w_{hl}$ for $h\neq 1, h<l$ as follows,\\ 
\begin{align*}
    u_k &= (\eta_k)^2 - \frac{1}{n} \sbar_1
    = \frac{\sbar_k^2}{2(\sbar_1 + \delta')} - \frac{1}{n} \sbar_1, && k = 2,...,n,\\
    w_{hl} &= \eta_{h}\eta_{l}
    = \frac{\sbar_h \sbar_l}{2(\sbar_1 + \delta')}, && h \neq 1, h<l.
\end{align*}

\begin{theorem} \label{th:7} 
Consider the rank-one mapping
\begin{align} \label{mo:r1maps}
    S =
    \begin{cases}
    [0]_{n \times n} & \text{ if } \sbar = (0,0,...,0),\\
    {\rm PMR1}              & \text{ otherwise}.
    \end{cases}
\end{align}
where
\begin{equation*}
    {\rm PMR1} = \eta (\eta)^T =  
    \begin{bmatrix}
    \frac{\sbar_1 + \delta'}{2} & \frac{\sbar_2}{2} & \hdots & \frac{\sbar_n}{2} \\
    \frac{\sbar_2}{2} & \frac{\sbar_2^2}{2[\sbar_1 + \delta']} & \hdots & \frac{\sbar_2 \sbar_n}{2[\sbar_1 + \delta']} \\
    \vdots    & \vdots    & \ddots & \vdots    \\
    \frac{\sbar_n}{2} & \frac{\sbar_2 \sbar_n}{2[\sbar_1 + \delta']} & \hdots & \frac{\sbar_n^2}{2[\sbar_1 + \delta']} \\
    \end{bmatrix}.
\end{equation*}
Then, (\ref{mo:r1maps}) together with $(X,y) = ({\rm Arw}(\xbar),\ybar)$ is a point-to-point admissible mapping.
\end{theorem}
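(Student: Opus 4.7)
The plan is to reduce the statement to Theorem \ref{th:6}, which already characterizes when a triple $(X,v,w,u,S)$ arising from a SOCO primal-dual pair is admissible. Since the $X$ and $y$ components of the mapping (namely $X = \arw(\xbar)$ and $y = \ybar$) already meet the requirements of Theorem \ref{th:6}, the whole proof reduces to verifying that the rank-one matrix $S = \eta\eta^T$ has the block structure prescribed in Theorem \ref{th:6}, with $w_{hl}$ and $u_k$ read off from its entries, and that $S \succeq 0$.

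First I would check that $\eta$ is well-defined. Because $\sbar \in \Lcal^n$, we have $\sbar_1 \geq \|\sbar_{2:n}\|_2$, so $\delta' = \sqrt{\sbar_1^2 - \|\sbar_{2:n}\|_2^2}$ is real and $\sbar_1 + \delta' > 0$ whenever $\sbar \neq 0$; the zero case is trivial with $u=0$, $w=0$, $S=0$. Next I would match the $(1,j)$ entries for $j \geq 2$: the formula gives $\eta_1 \eta_j = \tfrac{1}{2}\sbar_j$, which is exactly what the off-diagonal first row of $S$ must be according to \eqref{eq:derivedS}. This step is immediate from the definition of $\eta$.

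The substantive step is matching the remaining entries and reading off the dual parameters. Comparing the $(k,k)$ entry $\eta_k^2$ of $\eta\eta^T$ to $\tfrac{\sbar_1}{n} + u_k$ forces $u_k = \tfrac{\sbar_k^2}{2(\sbar_1+\delta')} - \tfrac{\sbar_1}{n}$, and comparing the $(h,l)$ entry $\eta_h\eta_l$ to $-w_{hl}$ forces $w_{hl} = -\tfrac{\sbar_h\sbar_l}{2(\sbar_1+\delta')}$. What remains is to verify that the $(1,1)$ entry is consistent, i.e.\ that $\eta_1^2 = \tfrac{\sbar_1}{n} - \sum_{k=2}^n u_k$. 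Plugging in the expression for $u_k$, this is equivalent to $\sum_{k=1}^n \eta_k^2 = \sbar_1$. This is the main (and essentially only nontrivial) identity to prove; the calculation uses $\|\sbar_{2:n}\|_2^2 = \sbar_1^2 - (\delta')^2$ to collapse $(\sbar_1+\delta')^2 + \|\sbar_{2:n}\|_2^2$ to $2\sbar_1(\sbar_1+\delta')$, after which dividing by $2(\sbar_1+\delta')$ yields $\sbar_1$ as required.

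Finally, I would observe that $S = \eta\eta^T \succeq 0$ automatically because it is a rank-one Gram matrix, so the PSD requirement of Theorem \ref{th:6} is satisfied without further work. Thus the hypotheses of Theorem \ref{th:6} hold and admissibility follows. For the point-to-point claim, note that $\eta$ is uniquely determined by $\sbar$ (the choice of sign in $\delta'$ is fixed to be nonnegative), so $u$, $w$, and $S$ are all uniquely determined by $(\xbar,\ybar,\sbar)$; conversely, $\sbar = c - A^T \ybar$ reconstructs the SOCO slack from $y = \ybar$, giving a well-defined inverse on the image. The only real obstacle is the algebraic identity $\sum_k \eta_k^2 = \sbar_1$, which is a short direct computation once $\|\sbar_{2:n}\|_2^2 = \sbar_1^2 - (\delta')^2$ is exploited.
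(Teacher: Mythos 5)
Your proposal is correct and follows essentially the same route as the paper, which simply reduces the claim to verifying the conditions of Theorem \ref{th:6} (mirroring how Theorem \ref{th:2} is reduced to Theorem \ref{th:1}); you supply the details, including the only nontrivial identity $\sum_k \eta_k^2 = \sbar_1$. Your sign convention $w_{hl} = -\eta_h\eta_l$ is in fact the one consistent with the $-w_{hl}$ entries in the $S$ of Theorem \ref{th:6}, so no gap there.
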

\begin{proof}
The proof is similar to that of Theorem \ref{th:2}.
\end{proof}
\subsection{Higher Rank Mapping}
The derivation of full rank mapping is similar to the derivation discussed in Section \ref{ssec:FRM}, as we show that when a SOCO solution $\sbar \in {\rm int}(\Lcal^n)$, then we can use a full rank mapping, i.e.
\begin{equation*}
    {\rm PMR} =\sum_{i=1}^{n}\eta^i(\eta^i)^T.
\end{equation*}
The proof of existence of a full rank mapping is similar to the proof presented in Appendix \ref{appsec:2}.

\begin{theorem} \label{th:8}
There exist mappings  ${\rm PMR}$ where {\rm rank}$({\rm PMR}(\bar{s}))=n$ for $\sbar\in {\rm int}(\Lcal^n)$.
\end{theorem}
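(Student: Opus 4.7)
The plan is to mirror the construction used for Theorem \ref{th:3} in Appendix \ref{appsec:2}, transplanting it from the primal side to the dual side through the structure of $S$ exhibited in \eqref{eq:derivedS}. The first step is to distill the admissibility requirement on $S$ from Theorem \ref{th:6}: a positive semidefinite matrix $S$ arises as the image of $\sbar$ under some admissible mapping (with suitable free multipliers $u_k$ and $w_{hl}$) if and only if $S_{1j} = \sbar_j/2$ for $j=2,\ldots,n$ and $\trace(S) = \sbar_1$. Indeed, for any such $S$ the assignments $u_k := S_{kk} - \sbar_1/n$ and $w_{hl} := -S_{hl}$ for $h<l$ with $h \neq 1$ exactly reproduce the form \eqref{eq:derivedS}. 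Hence the task reduces to constructing a rank-$n$ PSD matrix satisfying these two linear conditions.

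I would start from the rank-one mapping ${\rm PMR1}$ of Theorem \ref{th:7}, whose head vector $\eta$ already absorbs the entire trace budget $\sbar_1$. To open room for $n-1$ extra independent directions, pick $\epsilon > 0$ small enough that $(n-1)\epsilon^2 < \sbar_1 - \|\sbar_{2:n}\|_2$, which is possible precisely because $\sbar \in {\rm int}(\Lcal^n)$. Set $\eta^j := \epsilon\, e_j$ for $j=2,\ldots,n$, so these contribute only to the diagonal entries $(j,j)$ and do not disturb the first row/column. Then build $\eta^1$ by enforcing $\eta^1_1 \eta^1_j = \sbar_j/2$ for $j \geq 2$ together with the reduced budget $\|\eta^1\|_2^2 = \sbar_1 - (n-1)\epsilon^2$. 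Writing $t = (\eta^1_1)^2$ this reduces to the quadratic $4t^2 - 4(\sbar_1 - (n-1)\epsilon^2)\,t + \|\sbar_{2:n}\|_2^2 = 0$, whose discriminant is strictly positive under the chosen $\epsilon$ and yields a positive root $t > 0$. Finally, set $S := \sum_{i=1}^n \eta^i (\eta^i)^T$.

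Verification is routine: $S \succeq 0$ as a sum of rank-one PSD matrices; the first-row-column identity $S_{1j} = \sbar_j/2$ holds because only $\eta^1$ has a nonzero first coordinate and $\eta^1_1 \eta^1_j = \sbar_j/2$ by construction; the trace identity $\trace(S) = (\sbar_1 - (n-1)\epsilon^2) + (n-1)\epsilon^2 = \sbar_1$ follows by design; and ${\rm rank}(S)=n$ because $\eta^1_1 > 0$ while $\eta^2,\ldots,\eta^n$ all vanish in the first coordinate and span the remaining $n-1$ standard directions, making $\{\eta^1,\ldots,\eta^n\}$ linearly independent.

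The main obstacle is conceptual rather than computational: one has to recognize that the admissibility constraint on $S$ collapses to just the first row/column and the trace, because every remaining entry is absorbed by the free multipliers $u_k$ and $w_{hl}$. Once this reduction is in hand, the construction is a direct parallel of Algorithm~\ref{alg:FRM}, with the interior condition $\sbar \in {\rm int}(\Lcal^n)$ providing the slack $\sbar_1 - \|\sbar_{2:n}\|_2 > 0$ needed to divert a positive fraction of the trace budget away from the head vector $\eta^1$ into the orthogonal perturbations, lifting the rank from $1$ to $n$. Intermediate ranks $1 < k < n$ can be realized by activating only $k-1$ of the $\eta^j$, in parallel with the discussion of ${\rm MR}^k(\xbar)$ following Theorem \ref{th:3}.
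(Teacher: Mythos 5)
Your proof is correct, but it takes a genuinely different route from the paper's. The paper proves Theorem \ref{th:8} by rerunning Algorithm~\ref{alg:FRM} from Appendix \ref{appsec:2} with $\pi^1=\sbar$: an iterative halving scheme that perturbs each $\tau^k$ by $\epsilon$ in one coordinate, and then establishes full rank of the resulting matrix $\mathbb{B}$ of head vectors via a row-elimination argument, together with a system of quadratic inequalities that a sufficiently small $\epsilon$ must satisfy. You instead reduce admissibility of $S$ to the two linear conditions $S_{1j}=\sbar_j/2$ and $\trace(S)=\sbar_1$ (a correct reduction, since $u_k$ and $w_{hl}$ absorb all other entries of \eqref{eq:derivedS}), reserve $(n-1)\epsilon^2$ of the trace for the orthogonal directions $\epsilon e_j$, and solve a single quadratic for the head vector; linear independence of $\{\eta^1,\dots,\eta^n\}$ is then immediate because only $\eta^1$ has a nonzero first coordinate. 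This is cleaner and shorter: one inequality on $\epsilon$ instead of $n$, and no determinant computation. It is worth noting that your construction with $\epsilon=\sqrt{(\sbar_1-\|\sbar_{2:n}\|)/(2(n-1))}$ and the larger root $t=\theta'/4$ recovers exactly the Sim--Zhao map ${\rm MR}(\sbar)$ discussed after Theorem \ref{th:8}, which the paper already acknowledges as an alternative proof; what the paper's algorithmic route buys, and yours does not, is a parametric family of full-rank admissible maps obtained by varying $\epsilon$, which the authors use to explore the SDO feasible set. Your closing remark on realizing intermediate ranks $1<k<n$ by activating only $k-1$ of the $\eta^j$ matches the paper's ${\rm MR}^k$ discussion.
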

\begin{proof}
The proof of this theorem is similar to the proof presented in Appendix \ref{appsec:2}. The difference is that we need to set $\pi^1 = \sbar$ in the proof. 
\end{proof}
The reason why the procedure to show the existence of full rank mapping is similar for both directions is that although the mappings seem to be different, in fact both satisfy analogous conditions about the first row and column and the trace of the mapping matrix. From this point of view, they are very similar. In fact, the update vector in the procedure is meant to be satisfying those conditions. Thus, we can follow the procedure in Appendix \ref{appsec:2} for both mappings. 

Similar to Theorem 1 of \cite{sim2005note}, we can develop the following mapping for the primal side.
$${\rm MR}(\sbar)=\begin{pmatrix}
\frac{1}{4}\theta'&\frac{1}{2}\sbar_{2:n}^T\\
\frac{1}{2}\sbar_{2:n}&\frac{\sbar_1-\|\sbar_{2:n}\|}{2(n-1)}I+\frac{\sbar_{2:n}\sbar_{2:n}^T}{\theta'}
\end{pmatrix},$$
where $\theta'=\sbar_1+\|\sbar_{2:n}\|+\sqrt{(\sbar_1+\|\sbar_{2:n}\|)^2-4\|\sbar_{2:n}\|^2}$. Here, we can perform the same analysis we did in Section \ref{ssec:FRM}, too. Considering the case in which the solution is on the boundary of the second-order cone, i.e. $\sbar_1 = \|\sbar_{2:n}\|$. Then, $\theta' = 2\sbar_1$, and 
$${\rm MR}^1 (\sbar)=\begin{pmatrix}
\frac{1}{2}\sbar_1&\frac{1}{2}\sbar_{2:n}^T\\
\frac{1}{2}\sbar_{2:n}&\frac{\sbar_{2:n}\sbar_{2:n}^T}{2\sbar_1}
\end{pmatrix}.$$
We can see that this is exactly identical to the rank one mapping we presented in Theorem \ref{th:7}. We can also write it as 
\begin{equation*}
    {\rm MR}^1 (\sbar)= \gamma^1 (\gamma^1)^T
\end{equation*}
where 
\begin{equation*}
    \gamma^1 = \frac{1}{\sqrt{\theta'}} \left( \frac{1}{2} \theta', \sbar_{2}, \dots, \sbar_{n} \right)^T.
\end{equation*}
To obtain ${\rm MR}(\sbar)$ as the sum of rank-one matrices, we define
\begin{equation*}
    \gamma^j = \sqrt{\frac{\sbar_1 - \|\sbar_{2:n} \|}{2(n-1)}} \ e_j \qquad {\rm for } j =2,\dots,n,
\end{equation*}
where $e_j$ is a unit vector with 1 in element $j$. By this setting, we have 
$$
{\rm MR}(\sbar)= \sum_{j=1}^{n}\gamma^j(\gamma^j)^T.
$$
If the solution $\sbar$ of the SOCO dual problem is on the boundary of the cone, i.e. $\sbar_1 = \|\sbar_{2:n} \|$, then we get
\begin{align*}
    \gamma^1 &= \frac{1}{\sqrt{2\sbar_1}} \left(\sbar_1, \sbar_{2}, \dots, \sbar_{n} \right)^T, \\
    \gamma^j &= 0, \qquad \qquad \text{  for } j = 2,\dots, n.
\end{align*}
This leads to the rank one mapping ${\rm MR}^1 (\sbar)$. On the other hand, if the solution $\sbar \in {\rm int}(\Lcal^n)$, i.e. $\sbar_1 > \|\sbar_{2:n} \|$, then $\gamma^j \neq 0$ for $j = 2,\dots, n$, and we obtain the full rank mapping ${\rm MR}(\sbar)$. To construct an admissible rank-$k$ mapping, here one cannot take a combination of first $k$ vectors $\gamma^j$ as it violates the conditions given in Theorem \ref{th:6}, i.e. the sum of the diagonal elements will not be equal to $\sbar_1$. Recall the definition of $\Ncal$ from Section \ref{ssec:FRM}. The correct choice of $\gamma^j$ to construct a rank-$k$ mapping is
\begin{equation*}
    \gamma^j = \sqrt{\frac{\sbar_1 - \|\sbar_{2:n} \|}{2(k-1)}} \ e_j \qquad {\rm for } \ \ j \in \Ncal, \ {\rm and } \ \gamma^j = 0 \ {\rm for } \ \ j \notin \Ncal.
\end{equation*}
The resulting matrix ${\rm MR}(\sbar)$ has rank $k$, and one can easily see that it satisfies the conditions of Theorem \ref{th:6}. Analogous to the dual side, we have the following theorem.
\begin{theorem}\label{th:9}
Let $\rho(\sbar)=\max\{\rank (\Mcal(\sbar)): \Mcal \text{ is admissible map} \}$. We have 
\begin{itemize}
    \item $\rho(\sbar)=n$ if $\sbar\in {\rm int}(\Lcal^n).$
    \item $\rho(\sbar)=1$ if $\sbar\in \partial(\Lcal^n).$
\end{itemize}
\end{theorem}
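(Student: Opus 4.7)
The plan is to handle the two cases separately, leaning on Theorem \ref{th:8} for the interior case and on a tight PSD rigidity argument for the boundary case. For $\sbar \in {\rm int}(\Lcal^n)$, I would simply invoke Theorem \ref{th:8}, which already exhibits an admissible mapping ${\rm PMR}$ of rank $n$. Since any admissible $S$ lives in $\Smbb^n$, the value $n$ is also a trivial upper bound on $\rho(\sbar)$, giving $\rho(\sbar)=n$.

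For the boundary case $\sbar \in \partial(\Lcal^n)$, i.e.\ $\sbar_1 = \|\sbar_{2:n}\|$, the goal is to show that every admissible $S$ coincides with the rank-one matrix ${\rm PMR1}$. Recall from Theorem \ref{th:6} that an admissible $S$ satisfies $\trace(S)=\sbar_1$, $S_{1j}=\sbar_j/2$ for $j\geq 2$, and $S\succeq 0$, while the remaining entries (parameterized by $u_k$ and $w_{hl}$) remain free up to semidefiniteness. First I would extract the diagonal: summing the $2\times 2$ principal minor conditions $S_{11}S_{jj}\geq \sbar_j^2/4$ over $j=2,\dots,n$ and invoking $\|\sbar_{2:n}\|^2=\sbar_1^2$ together with the trace constraint forces $(S_{11}-\sbar_1/2)^2\leq 0$, hence $S_{11}=\sbar_1/2$; equality in each individual minor then pins down $S_{jj}=\sbar_j^2/(2\sbar_1)$ for $j\geq 2$.

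Next I would apply the Schur complement to the block decomposition of $S$ with leading entry $S_{11}$, first column $v=\sbar_{2:n}/2$, and trailing $(n-1)\times(n-1)$ block $B$: since $B\succeq vv^T/S_{11}=\sbar_{2:n}\sbar_{2:n}^T/(2\sbar_1)$ and both sides share trace $\sbar_1/2$ by the previous step, the PSD difference $B-vv^T/S_{11}$ has zero trace and must vanish identically. This determines $B$ entirely, and the resulting $S$ equals $\eta\eta^T$ with $\eta=\sbar/\sqrt{2\sbar_1}$, which has rank one; therefore $\rho(\sbar)=1$.

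The main obstacle will be the boundary case, specifically pinning down the diagonal through the scalar minor/trace argument, since all of the rigidity comes from squeezing the summed minor inequality against $\|\sbar_{2:n}\|^2=\sbar_1^2$. Once the diagonal is locked, the Schur complement collapses the off-diagonal block automatically; the interior case, by contrast, reduces immediately to Theorem \ref{th:8}.
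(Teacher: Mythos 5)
Your proposal is correct, but it takes a genuinely different route from the paper: the paper's entire proof of this theorem is a deferral, stating only that the argument is ``similar to Theorem 1 of \cite{sim2005note}'', whereas you supply a self-contained rigidity argument. Your interior case matches what the paper implicitly intends (exhibit a rank-$n$ admissible map via Theorem \ref{th:8} and use the trivial upper bound $n$). Your boundary case is the genuinely new contribution: starting from the constraints that characterize admissible $S$ in Theorem \ref{th:6} ($\trace(S)=\sbar_1$, $S_{1j}=\sbar_j/2$, $S\succeq 0$), you squeeze the summed $2\times 2$ minor inequality $S_{11}\sum_{j\ge 2}S_{jj}\ge \|\sbar_{2:n}\|^2/4$ against the trace constraint to force $S_{11}=\sbar_1/2$, then use equality in each minor plus a zero-trace PSD Schur complement to collapse $S$ to the unique rank-one matrix $\eta\eta^T$. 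This is stronger than what the statement asks for --- you prove uniqueness of the admissible image on the boundary, not merely that its rank is one --- and it buys a proof that does not depend on the external reference. Two small caveats: your argument tacitly assumes $\sbar_1>0$ (you divide by $\sbar_1$ and invoke the Schur complement with $S_{11}>0$), so the apex $\sbar=0$ must be treated separately, where the only admissible image is the zero matrix of rank $0$, not $1$; this edge case is a defect of the paper's statement itself, not of your argument. Second, you should make explicit that the conditions you ``recall from Theorem \ref{th:6}'' are in fact necessary for any admissible map in the primal-side derivation (they follow from the dual feasibility equation of \eqref{mo:2dsdod} together with the zero first rows and zero traces of the matrices $\Check{\mathtt{A}}_{hl}$ and $\Hat{\mathtt{A}}_k$), since the maximum in the definition of $\rho$ ranges over all admissible maps, not only those of the form given in Theorem \ref{th:6}.
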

\begin{proof}
Proof of this theorem is similar to Theorem 1 of \cite{sim2005note}.
\end{proof}
\subsection{Generalization to Multiple SOCs} \label{ss:psgen}
Similar to the discussion in Subsection \ref{ss:dsgen}, here we adopt the mapping presented in Theorem \ref{th:5} to the primal side. Here we need to define the following notations. Let,
\begin{align*}
    \Tilde{\mathtt{C}} =     
    \begin{bmatrix}
    \Vec{\mathtt{C}}^1 &         &        &                     \\
            & \Vec{\mathtt{C}}^2   &        &                     \\
            &           & \ddots &                     \\
            &           &        & \Vec{\mathtt{C}}^r
    \end{bmatrix},
    \quad
    \text{ where }
    \quad
    \Vec{\mathtt{C}}^k &= 
    \arw(\frac{1}{n} c_{1}^{k}, \frac{1}{2} c_{2}^{k},  ..., \frac{1}{2} c_{n}^{k})
\end{align*}
and
\begin{align*}
    \Tilde{\mathtt{A}}_j = \text{DArw}(\Vec{\mathtt{A}}_j^1, \Vec{\mathtt{A}}_j^2, \dots ,\Vec{\mathtt{A}}_j^r) 
    \qquad \text{ for all } j= 1,...,m,
\end{align*}
where 
\begin{align*}
    \Vec{\mathtt{A}}_j^{k}  = 
    \arw(\frac{1}{n} a_{i1}^{k}, \frac{1}{2} a_{i2}^{k}, ..., \frac{1}{2} a_{in}^{k})
    \qquad \text{ for all } k= 1,...,r.
\end{align*}

Similar to the structure of $ \Tilde{\mathtt{C}}$ and $\Tilde{\mathtt{A}}_j$, matrix $\Xtilde$ needs to have a block-diagonal structure. Thus, we need to enforce this structure on it. To this end, we define an operator. Let ${\rm Cf}(i) = j$, be an operator which returns the corresponding cone $j$ to input row $i$ of the block-diagonal matrices. 
Let set $\Ical$ consists of all entries that need to be zero due to being either off-arrow within the blocks, or the off-block-diagonal structure.
\begin{align*}
    \Ical &= \Bigg\{(h,l) \in [1, \sum_{i=1}^{r} n_i]^2 \Bigg| 
    \begin{cases}
    \sum_{i=0}^{{\rm Cf}(h)} n_i < l, \\
    \sum_{i=0}^{{\rm Cf}(h)-1} n_i +1 < h < \sum_{i=0}^{{\rm Cf}(h)} n_i, \quad {\rm and} \quad h < l \leq \sum_{i=0}^{{\rm Cf}(h)} n_i.
    \end{cases}\Bigg\},\\
    \Kcal &= \Bigg\{ k \in [1, \sum_{i=1}^{r} n_i] \Bigg| \sum_{i=0}^{{\rm Cf}(k)-1} n_i +1 < k \leq \sum_{i=0}^{{\rm Cf}(k)} n_i \Bigg\},
\end{align*}
where $n_0 = 0$. Note that here we introduce matrices $\Tilde{\Check{\mathtt{A}}}_{hl}$ and $\Tilde{\Hat{\mathtt{A}}}_k$ which are generalizations of $\Check{\mathtt{A}}_{hl}$ and $\Hat{\mathtt{A}}_k$, for the multiple cone case, respectively. In detail, $\Tilde{\Check{\mathtt{A}}}_{hl}$ enforces all entries off-block-diagonal and off-arrow within each block to be zero. Moreover, $\Tilde{\Hat{\mathtt{A}}}_k$ guarantees that in each block, diagonal entries are equal by setting entry $(k,k)$ equal to $-1$ and entry $(\sum_{t=1}^{{\rm Cf}(k)-1} n_t +1, \sum_{t=1}^{{\rm Cf}(k)-1} n_t +1)$ equal to $+1$.
By this notation, we have
\begin{equation} 
\label{mo:genprimalp} \tag{$\Pcal_{SDO}^{\Ptilde}$}
    \begin{aligned}
    \min        \   & \trace{(\Tilde{\mathtt{C}} \Xtilde)}\\
    \text{  s.t. }  & \trace{(\Tilde{\mathtt{A}}_j  \Xtilde)}  = b_j, && j = 1,...,m \\
                    & \trace{(\Tilde{\Check{\mathtt{A}}}_{hl} \Xtilde)}  =  0 ,  && (h,l) \in \Ical \\
                    & \trace{(\Tilde{\Hat{\mathtt{A}}}_k \Xtilde)}  =  0 , && k \in \Kcal \\
                    & \Xtilde \succeq 0. 
    \end{aligned}
\end{equation}
Then, we dualize and get
\begin{equation} 
\label{mo:genprimald} \tag{$\Dcal_{SDO}^{\Ptilde}$}
    \begin{aligned}
    \max        \   & b^T y\\
    \text{  s.t. }  & \sum_{j=1}^m y_j \Tilde{\mathtt{A}}_j + \sum_{h,l \in \Ical} v_{hl} \Tilde{\Check{\mathtt{A}}}_{hl} + \sum_{k \in \Kcal} z_k \Tilde{\Hat{\mathtt{A}}}_k + \Stilde = \Tilde{\mathtt{C}},\\
                    & \Stilde \succeq 0.
    \end{aligned}
\end{equation}
In similar fashion to other introduced models, we can define the sets of feasible and optimal solutions corresponding to models \eqref{mo:genprimalp} and \eqref{mo:genprimald}.

Next, we can present the following theorem. 
\begin{theorem} \label{th:10}
Consider the SOCO problem pairs (\ref{mo:gensocop}) and (\ref{mo:gensocod}) with 
\begin{equation*}
    (\xbar^1;\xbar^2;...;\xbar^r;\ybar;\sbar^1;\sbar^2;...;\sbar^r) \in \Fcal_{\Pcal_{SOCO}} \times \Fcal_{\Dcal_{SOCO}},
\end{equation*}
and SDO problem pairs \eqref{mo:genprimalp} and \eqref{mo:genprimald} with $(\Xtilde,\ytilde,\Stilde) \in \Fcal_{\Pcal_{SDO}^{\Ptilde}} \times \Fcal_{\Dcal_{SDO}^{\Ptilde}} $. Then, the mapping $(\Xtilde,\ytilde,v,z,\Stilde) = \Mcal(\xbar,\ybar,\sbar)$ defined as
\begin{align*}
    \Xtilde &= 
    {\rm DArw}(X^1, X^2, ..., X^r),\\
    \ytilde &= \ybar, \\
    \Stilde &= 
    \tiny
    \begin{bmatrix}
    S^1 &     &        & \\
        & S^2 &        & \\
        &     & \ddots & \\
        &     &        & S^r
    \end{bmatrix}
    \normalsize
    \succeq 0  \ {\rm s.t.} 
    \begin{bmatrix}
    \sum_{j=u_i}^{u_i+n_i} \Stilde_{jj}  \\
    \Stilde_{u_i,u_i+1}                  \\
    \vdots                               \\
    \Stilde_{u_i,u_i+n_i}            
    \end{bmatrix} 
    = 
    \begin{bmatrix}
    \sbar_1^i\\
    \frac{\sbar_2^i}{2}\\
    \vdots\\
    \frac{\sbar_n^i}{2}
    \end{bmatrix}
\end{align*}
with vectors $v$ and $z$ taking values such that $\Stilde \succeq 0$. Furthermore, the inverse mapping denoted by $(\xbar,\ybar,\sbar) = \Mcal^{-1}(\Xtilde,\ytilde,v,z,\Stilde)$, with
\begin{align*}
    (\xbar^1;...;\xbar^r) \text{ with } \xbar^i =  
    \begin{bmatrix}
    \xtilde_{u_i,u_i} \\
    \xtilde_{u_i +1 ,u_i} \\
    \vdots     \\
    \xtilde_{u_i +n_i ,u_i}
    \end{bmatrix},
    \quad
    \ybar = \ytilde, \quad
    (\sbar^1;...;\sbar^r) \text{ with } \sbar^i =  
    \begin{bmatrix}
    \sum_{j=u_i}^{u_i+n_i} \Stilde_{jj}  \\
    2\Stilde_{u_i,u_i+1}                 \\
    \vdots                               \\
    2\Stilde_{u_i,u_i +n_i}            
    \end{bmatrix},
\end{align*}
where $u_i = 1+\sum_{k=0}^{i-1} n_{i}$ and $n_0 = 0$, is an admissible mapping.
\end{theorem}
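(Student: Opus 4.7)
The plan is to prove this by separating two independent pieces: the blockwise extension argument used in passing from Theorem \ref{th:1} to Theorem \ref{th:5}, and the primal-side arrow-head bookkeeping developed in the single-cone proof of Theorem \ref{th:6}. The underlying observation is that the combined effect of the structural constraints indexed by $\Ical$ and $\Kcal$ is precisely to carve out the ``block-diagonal arrow-head'' submanifold of $\Smbb^n$, so each diagonal block of a feasible $\Xtilde$ behaves exactly as in the single-cone setting.

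First I would verify forward primal feasibility. Since $\Xtilde = {\rm DArw}(X^1,\dots,X^r)$ with $X^i = \arw(\xbar^i)$, positive semidefiniteness of $\Xtilde$ reduces block by block to the equivalence in \eqref{eq:arw}. The constraints $\trace(\Tilde{\Check{\mathtt{A}}}_{hl}\Xtilde)=0$ for $(h,l)\in\Ical$ zero out all off-block-diagonal entries of $\Xtilde$ together with, within each diagonal block, every entry that is neither in the first row, nor in the first column, nor on the diagonal; the constraints $\trace(\Tilde{\Hat{\mathtt{A}}}_k\Xtilde)=0$ for $k\in\Kcal$ then equalize the diagonal entries inside each block. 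Both sets of conditions are visibly satisfied by the proposed ${\rm DArw}$ form. The conditions $\trace(\Tilde{\mathtt{A}}_j\Xtilde)=b_j$ reduce to $\sum_k \trace(\Vec{\mathtt{A}}_j^k X^k) = \sum_k (a_{j}^k)^T \xbar^k = b_j$ by applying blockwise the scaled arrow-head identity established in the proof of Theorem \ref{th:6}.

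For dual feasibility I would set $\ytilde=\ybar$ and construct $\Stilde$ as a block-diagonal matrix whose $i$-th block $S^i$ is built exactly as in Theorem \ref{th:6}, with every free dual coordinate $v_{hl}, z_k$ localized to a single block. Because $\Tilde{\mathtt{C}}$, each $\Tilde{\mathtt{A}}_j$, and each $\Tilde{\Check{\mathtt{A}}}_{hl},\Tilde{\Hat{\mathtt{A}}}_k$ either sit inside a single block or couple entries within a single block, the SDO dual equation decouples into $r$ independent single-cone equations, each of which is precisely the one verified in Theorem \ref{th:6}. Positive semidefiniteness of $\Stilde$ is equivalent to positive semidefiniteness of every $S^i$, which Theorem \ref{th:6} already shows can be arranged by an appropriate choice of $v_{hl}, z_k$. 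Objective preservation is then immediate: $\trace(\Tilde{\mathtt{C}}\Xtilde)=\sum_k(c^k)^T\xbar^k$ and $b^T\ytilde=b^T\ybar$.

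The inverse direction runs the same argument in reverse: for any $\Xtilde\in\Fcal_{\Pcal_{SDO}^{\Ptilde}}$, the constraints indexed by $\Ical$ force block-diagonality plus arrow-head structure within each diagonal block, the constraints indexed by $\Kcal$ force each block's diagonal to be constant, and blockwise positive semidefiniteness is equivalent to each $\xbar^i=\arw^{-1}(X^i)$ lying in $\Lcal^{n_i}$; this recovers $(\xbar^1,\dots,\xbar^r)$ unambiguously, while $\ybar$ and each $\sbar^i$ are read off exactly as in Theorem \ref{th:6}. The main obstacle, in my view, is purely combinatorial: one must check carefully that the index sets $\Ical$ and $\Kcal$ target every entry of $\Xtilde$ that must be zero or must equal another entry, no more and no less, so that the SDO primal feasible region really coincides with the image of ${\rm DArw}\circ\arw$ applied to $\Lcal^{\nbar}$. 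Once that is nailed down, the rest is a faithful transcription of the single-cone arguments behind Theorems \ref{th:5} and \ref{th:6}.
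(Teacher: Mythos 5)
Your proposal is correct and follows essentially the same route the paper intends: the paper's proof of Theorem \ref{th:10} is given only as ``analogous to Theorem \ref{th:1},'' and your elaboration — decoupling the problem block by block via the constraints indexed by $\Ical$ and $\Kcal$, then invoking the single-cone primal-side verification of Theorem \ref{th:6} within each block, exactly as Theorem \ref{th:5} extends Theorem \ref{th:1} on the dual side — is the intended argument. Your closing remark that one must confirm $\Ical$ and $\Kcal$ enforce precisely the block-diagonal arrow-head structure (no more, no less) is the right point to nail down, and it is the only substantive detail the paper itself leaves implicit.
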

\begin{proof}
The proof of this theorem is analogous to that of Theorem \ref{th:1}.
\end{proof}
We observe that although in the general case the mapping is similar to that of the dual side, but the details of the mapping are different. This mapping is different from the other one since it requires more work to enforce the arrow-head structure on matrix $\Xtilde$. An analogous analysis on different rank mappings can be done for the generalized multiple cone case, with the difference that here we have the arrow-head structure for matrix $\Xtilde$, and we can compute matrix $\Stilde$ with different ranks. We skip that similar analysis here for brevity.

Now, that we have the generalized standard mapping for both sides, we can proceed with studying the mapping of the optimal partitions on both sides in the next section.

\section{Mapping the Optimal Partition} \label{sec:5}
As shown in the previous sections, the proposed mappings represent a solution of SOCO depending on where it is located in the cone. For a solution on the boundary of the second-order cone, all admissible maps provide a rank-one positive semidefinite matrix, while a solution in the interior of the Lorentz cone can be mapped to  semidefinite matrices with different ranks. This correspondence can be analyzed in order to see how the optimal partition of SOCO is mapped to that of the derived SDO counterparts. For mapping the optimal partition, maximally complementary solutions are of particular interest. First, we define a helpful notation, and then the following theorem discusses the preservation of maximal complementarity. 
\begin{definition}[Proper Map]
Mapping $\Mcal$ is a \textit{proper map} if $\Mcal$ is admissible and ${\rm rank}(X)=\rho(\xbar)$ for all $\xbar\in \Lcal^n$, or ${\rm rank}(S)=\rho(\sbar)$ for all $\sbar\in \Lcal^n$
\end{definition}
Based on this definition, a rank-one mapping is not proper but map ${\rm MR}$ of \cite{sim2005note} is proper.
In the subsequent theorem, we show that a proper mapping preserves maximal complementarity using eigenvalues of the mapped solution. We used the mapping approach of Section~\ref{sec:3} which starts from dual side. 
Let $\lambda_i^X$ denotes the $i^{th}$ eigenvalue of a matrix $X$. Then, the eigenvalues of an arrow-head  matrix $X=\arw(\xbar)$ are $\lambda_1^{X}=\xbar_1-\|\xbar_{2:n}\|$, $\lambda_2^{X}=\dots=\lambda_{n-1}^{X}=\xbar_1$, and $\lambda_{n}^{X}=\xbar_1+\|\xbar_{2:n}\|$, see e.g. \cite{alizadeh2003second}.

\begin{theorem} \label{th:11}
For a maximally complementary solution $(\xbar; \ybar; \sbar) \in \Pcal^*_{SOCO} \times \Dcal^*_{SOCO}$, the mapped solution $(\Xtilde, \ytilde, \Stilde)=\Mcal(\xbar; \ybar; \sbar) \in {\Pcal^{\Dtilde^*}_{SDO}} \times {\Dcal^{\Dtilde^*}_{SDO}}$ is maximally complementary if $\Mcal$ is a proper map.
\end{theorem}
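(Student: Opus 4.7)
The plan is to use the standard SDO characterization that a primal-dual optimal pair $(\Xtilde^*, \Stilde^*)$ is maximally complementary if and only if both $\rank(\Xtilde^*)$ and $\rank(\Stilde^*)$ are simultaneously maximal among all primal-dual optimal solutions. Corollaries \ref{col:d-obj}--\ref{col:d-comp} already imply that $\Mcal(\bar x^*;\bar y^*;\bar s^*)$ is optimal and complementary in the SDO counterpart, so only the two rank-maximality conditions need to be verified.

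First I would handle $\Stilde^* = \text{DArw}(\bar s^{*1},\ldots,\bar s^{*r})$. Every feasible $\Stilde \in \Fcal_{\Dcal_{SDO}^{\Dtilde}}$ has the form $\Ctilde - \sum_i y_i \Atilde_i$, and since $\Ctilde$ and the $\Atilde_i$ are block-diagonal with arrow-head blocks, every feasible $\Stilde$ inherits the same block-diagonal arrow-head structure; its rank therefore splits cleanly block by block. The arrow-head eigenvalue formula recalled immediately before the theorem shows that $\rank(\arw(\bar s^{*i}))$ equals $n_i$, $n_i-1$, or $0$ according to whether $\bar s^{*i} \in \interior(\Lcal^{n_i})$, $\bar s^{*i} \in \partial(\Lcal^{n_i})\setminus\{0\}$, or $\bar s^{*i}=0$. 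Because $\bar s^*$ is maximally complementary in SOCO, the definitions of $\bar\Ncal$, $\bar\Rcal$, and $\bar\Tcal_3$ force each block-wise rank to be the largest attainable by any optimal SOCO dual, and the inverse mapping of Theorem \ref{th:5} transports this per-block maximality to global maximality of $\rank(\Stilde^*)$ over $\Dcal^{\Dtilde^*}_{SDO}$.

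Next I would treat $\Xtilde^*$. Properness of $\Mcal$ combined with Theorem \ref{th:4} pins the block-diagonal part of $\Xtilde^*$ corresponding to cone $i$ to have rank $\rho(\bar x^{*i})$: namely $n_i$ on $\bar\Bcal$, $1$ on $\bar\Rcal\cup\bar\Tcal_2$, and $0$ elsewhere. To promote this to global maximality of $\rank(\Xtilde^*)$ over all optimal $\Xtilde$, I would invoke complementarity $\Xtilde\Stilde^* = 0$, which confines $\Rcal(\Xtilde) \subseteq \Ncal(\Stilde^*)$. Using the explicit block-diagonal arrow-head null-space of $\Stilde^*$ computed in the previous paragraph together with the inverse map, any competing optimal $\Xtilde$ induces an optimal SOCO $\bar x$ which, by the definitions of $\bar\Ncal$, $\bar\Tcal_1$, $\bar\Tcal_3$, must vanish on exactly the cones where $\bar x^*$ vanishes; a dimension count then yields $\rank(\Xtilde) \leq \rank(\Xtilde^*)$.

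The main obstacle will be this last step: unlike $\Stilde$, the SDO primal variable $\Xtilde$ is not forced to be block-diagonal (as noted in the remark following Theorem~\ref{th:5}), so off-block-diagonal entries could in principle inflate $\rank(\Xtilde)$ beyond the block-wise contributions. The tool to control them is precisely complementarity with the maximal dual $\Stilde^*$, whose large block-wise rank severely restricts the subspace in which $\Rcal(\Xtilde)$ can lie; combined with the properness hypothesis on $\Mcal$ and the inverse mapping back to SOCO, the rank inequality closes. The analogous statement for the primal-side mappings of Section \ref{sec:4} follows by a symmetric argument, interchanging the roles of $\bar x$ and $\bar s$ and invoking a proper map on the dual side.
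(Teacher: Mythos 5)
Your proposal is correct and follows essentially the same route as the paper: both classify the eigenvalues/ranks of the mapped blocks according to the six sets of the SOCO optimal partition, and both close the argument by pulling a hypothetically higher-rank SDO optimal solution back through the inverse map to contradict maximal complementarity in SOCO (your direct rank-maximality phrasing and the paper's contradiction on eigenvalues are two formulations of the same characterization). You even flag the same delicate point the paper treats only informally, namely that a competing optimal $\Xtilde$ need not be block-diagonal and must be controlled via complementarity with the maximal-rank $\Stilde$.
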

\begin{proof}
Given a maximally complementary solution $(\xbar; \ybar; \sbar) \in \Pcal^*_{SOCO} \times \Dcal^*_{SOCO}$, one can analyze the result of mapping based how the cones are partitioned. The following analysis holds for the dual side direction. In this direction, we have matrix $\Stilde$ as a block diagonal of arrow-head matrices. Since $\Mcal$ is proper map, the eigenvalues of matrix $\Xtilde$ can be partitioned based on Lorentz cones.  If vector $\xbar^i$ is on the non-zero boundary of Lorentz cone, the all corresponding eigenvalues are non-zero except the first one. If the vector $\xbar^i$ is in the interior of Lorentz cone, the corresponding eigenvalues are positive. For the point of the second-order cone, all the corresponding eigenvalues are zero. Thus, we have
\begin{align*}
    \text{if } i \in \bar{\Bcal} &=
    \begin{cases}
    \xbar_1^i > \|\xbar_{2:n_i}^i\|  &\to \text{ full rank matrix $X^i$ with } \lambda_1^{X^i}, \lambda_2^{X^i}, ..., \lambda_{n_i}^{X^i} > 0,\\
    \sbar^i = 0  &\to \text{ zero $S^i$ matrix with } \lambda_1^{S^i} = ... = \lambda_{n_i}^{S^i} = 0,
    \end{cases}\\
    \text{if } i \in \bar{\Ncal} &=
    \begin{cases}
    \xbar^i = 0  &\to \text{ zero matrix $X^i$ with } \lambda_1^{X^i} = ... = \lambda_{n_i}^{X^i} = 0,\\
    \sbar_1^i > \|\sbar_{2:n_i}^i\|  &\to \text{ arrow-head matrix $S^i$ with } \lambda_1^{S^i}, \lambda_2^{S^i}, ..., \lambda_{n_i}^{S^i} > 0,
    \end{cases}\\
    \text{if } i \in \bar{\Rcal} &=
    \begin{cases}
    \xbar_1^i = \|\xbar_{2:{n_i}}^i\| >0   &\to \text{ rank-one matrix $X^i$ with } \lambda_1^{X^i} >0, \lambda_2^{X^i} = ... = \lambda_{n_i}^{X^i}  = 0\\
    \sbar_1^i = \|\sbar_{2:{n_i}}^i\| >0  &\to \text{ arrow-head $S^i$ matrix with } \lambda_1^{S^i} = 0, \lambda_2^{S^i}, ..., \lambda_{n_i}^{S^i} > 0,
    \end{cases}\\
    \text{if } i \in \bar{\Tcal}_1 &=
    \begin{cases}
    \xbar^i = 0 &\to \text{ zero matrix $X^i$ with } \lambda_1^{X^i} = ... = \lambda_{n_i}^{X^i} = 0,\\
    \sbar^i = 0 &\to \text{ zero matrix $S^i$ with } \lambda_1^{S^i} = ... = \lambda_{n_i}^{S^i} = 0,
    \end{cases}\\
    \text{if } i \in \bar{\Tcal}_2 &=
    \begin{cases}
    \xbar_1^i = \|\xbar_{2:{n_i}}^i\| > 0 &\to \text{ rank-one matrix $X^i$ with } \lambda_1^{X^i} >0, \lambda_2^{X^i} = ... = \lambda_{n_i}^{X^i}  = 0\\
    \sbar^i = 0  & \to \text{ zero matrix $S^i$ with } \lambda_1^{S^i} = ... = \lambda_{n_i}^{S^i} = 0,
    \end{cases}\\
    \text{if } i \in \bar{\Tcal}_3 &=
    \begin{cases}
    \xbar^i = 0 & \to \text{ zero matrix $X^i$ with } \lambda_1^{X^i} = ... = \lambda_{n_i}^{X^i} = 0,\\
    \sbar_1^i = \|\sbar_{2:{n_i}}^i\| > 0 & \to \text{ arrow-head matrix $S^i$ with } \lambda_1^{S^i} = 0, \lambda_2^{S^i}, ..., \lambda_{n_i}^{S^i} > 0,
    \end{cases}
\end{align*}
Now, using that $\Mcal$ is a proper, $\Mcal$ maps a given maximally complementary optimal SOCO solution  to $(\Xtilde, \ytilde, \Stilde) \in {\Pcal^{\Dtilde^*}_{SDO}} \times {\Dcal^{\Dtilde^*}_{SDO}}$. Here, we show that this solution is maximally complementary for the SDO counterpart problem. The proof goes by contradiction. Let us assume to the contrary that $(\Xtilde, \ytilde, \Stilde)$ is not a maximally complementary solution. Let $(\Xhat, \yhat, \Shat) \in {\Pcal^{\Dtilde^*}_{SDO}} \times {\Dcal^{\Dtilde^*}_{SDO}}$ be a maximally complementary solution. Since both $(\Xtilde, \ytilde, \Stilde)$ and $(\Xhat, \yhat, \Shat)$ are optimal, then  $\Shat \Xtilde = 0$. Accordingly, if $\lambda^{\Stilde^i}_j>0$ then $\lambda^{\Shat^i}_j>0$. Now it is enough to show that there are no $i,j$ such that $\lambda^{\Stilde^i}_j=0$ and $\lambda^{\Shat^i}_j>0$.  If there exist such index, by doing inverse mapping, the mapped solution in SOCO will have larger partition set either for $\bar{\Ncal}$, $\bar{\Rcal}$, or $\bar{\Tcal}_3$ which contradicts with assumption that solution $(\xbar; \ybar; \sbar )$ being a maximally complementary solution of SOCO. With similar reasoning we can show that there are no $i,j$ such that $\lambda^{\Xtilde^i}_j=0$ and $\lambda^{\Xhat^i}_j>0$.
Thus, we can conclude that a proper map preserves maximal complementarity. 
\end{proof}
One can prove similar theorem as stated as follows for the mapping of Section~\ref{sec:4} starting from primal side.
\begin{theorem} 
For a maximally complementary solution $(\xbar; \ybar; \sbar) \in \Pcal^*_{SOCO} \times \Dcal^*_{SOCO}$, the mapped solution $(\Xtilde, \ytilde, \Stilde)=\Mcal(\xbar; \ybar; \sbar) \in {\Pcal^{\Ptilde^*}_{SDO}} \times {\Dcal^{\Ptilde^*}_{SDO}}$ is maximally complementary if $\Mcal$ is a proper map.
\end{theorem}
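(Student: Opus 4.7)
The plan is to mirror the proof of Theorem \ref{th:11}, but with the roles of $\Xtilde$ and $\Stilde$ exchanged, since on the primal side it is $\Xtilde=\arw(\xbar)$ (block-diagonal with arrow-head blocks) whose eigenvalues are rigidly determined by the standard formulas $\lambda_1^{X^i}=\xbar_1^i-\|\xbar_{2:n_i}^i\|$, $\lambda_j^{X^i}=\xbar_1^i$ for $2\le j\le n_i-1$, $\lambda_{n_i}^{X^i}=\xbar_1^i+\|\xbar_{2:n_i}^i\|$, while $\Stilde^i$ inherits its rank from the properness of $\Mcal$ applied to $\sbar^i$. First, I would go through the six partition classes $\bar{\Bcal},\bar{\Ncal},\bar{\Rcal},\bar{\Tcal}_1,\bar{\Tcal}_2,\bar{\Tcal}_3$ and record, for each, the resulting rank and eigenvalue pattern of $\Xtilde^i$ and $\Stilde^i$. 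In $\bar{\Bcal}$: $\Xtilde^i$ is full rank, $\Stilde^i=0$. In $\bar{\Ncal}$: $\Xtilde^i=0$, $\Stilde^i$ has rank $n_i$ by properness. In $\bar{\Rcal}$: $\Xtilde^i$ has rank $n_i-1$ with exactly one zero eigenvalue (since $\xbar_1^i=\|\xbar_{2:n_i}^i\|$), and $\Stilde^i$ has rank $n_i$. In $\bar{\Tcal}_1$: both vanish. In $\bar{\Tcal}_2$: $\Xtilde^i$ has rank $n_i-1$, $\Stilde^i=0$. In $\bar{\Tcal}_3$: $\Xtilde^i=0$ and $\Stilde^i$ has rank $n_i$.

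Next, I would argue by contradiction exactly as in Theorem \ref{th:11}. Suppose $(\Xtilde,\ytilde,\Stilde)$ is not maximally complementary in $\Pcal^{\Ptilde^*}_{SDO}\times\Dcal^{\Ptilde^*}_{SDO}$, and let $(\Xhat,\yhat,\Shat)$ be a maximally complementary solution of the SDO pair. By optimality, $\Shat\Xtilde=0$ and $\Stilde\Xhat=0$, so every nonzero eigenvalue direction of $\Xtilde$ (respectively $\Stilde$) remains a nonzero eigenvalue direction of $\Xhat$ (respectively $\Shat$). Then, either there exists an index $i$ and a coordinate $j$ with $\lambda_j^{\Xtilde^i}=0$ but $\lambda_j^{\Xhat^i}>0$, or the analogous phenomenon occurs for $\Stilde$. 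Applying the inverse mapping $\Mcal^{-1}$ to $(\Xhat,\yhat,\Shat)$ produces a SOCO optimal solution $(\xhat;\yhat;\shat)\in\Pcal^*_{SOCO}\times\Dcal^*_{SOCO}$ whose block-$i$ vector lies strictly deeper inside $\Lcal^{n_i}$, which in turn forces index $i$ into a partition class strictly larger than the one assigned by $(\xbar;\ybar;\sbar)$ (migrating from $\bar{\Tcal}_1$ to $\bar{\Tcal}_2,\bar{\Tcal}_3$ or $\bar{\Rcal}$, from $\bar{\Rcal}/\bar{\Tcal}_3$ to $\bar{\Ncal}$, or from $\bar{\Rcal}/\bar{\Tcal}_2$ to $\bar{\Bcal}$, as appropriate). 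This contradicts the maximal complementarity of $(\xbar;\ybar;\sbar)$, closing the argument.

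The main obstacle, and the step that requires the most care, is verifying that the inverse mapping $\Mcal^{-1}$ of Theorem \ref{th:10} actually translates a rank increase of $\Xhat^i$ or $\Shat^i$ into a strict enlargement of the SOCO partition class. For the $\Stilde$ side this follows directly because $\Mcal$ is a proper map on the dual blocks and the inverse reads off $\sbar^i$ from the first row and the trace of the block, so a higher-rank block forces $\sbar_1^i>\|\sbar_{2:n_i}^i\|$ rather than equality, and a nonzero block forces $\sbar^i\ne 0$. For the $\Xtilde$ side one must use the additional primal-side linear constraints $\trace(\tilde{\check{\mathtt{A}}}_{hl}\Xtilde)=0$ and $\trace(\tilde{\hat{\mathtt{A}}}_k\Xtilde)=0$, which guarantee that any feasible $\Xhat$ is again block-diagonal arrow-head, so that $\arw^{-1}(\Xhat^i)$ is well defined and the eigenvalue pattern of $\Xhat^i$ is again governed by the same three formulas. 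With this in hand the rank-to-partition correspondence is rigid and the contradiction argument goes through verbatim.
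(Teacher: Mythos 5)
Your overall strategy is exactly the one the paper intends: the paper's proof of this statement is literally ``analogous to Theorem \ref{th:11}'' with the roles of $\Xtilde$ and $\Stilde$ exchanged, and your observation that the extra linear constraints $\trace(\Tilde{\Check{\mathtt{A}}}_{hl}\Xtilde)=0$, $\trace(\Tilde{\Hat{\mathtt{A}}}_k\Xtilde)=0$ force any feasible $\Xhat$ to be block-diagonal arrow-head (so its eigenvalues are rigid) is the right supporting detail for the contradiction step.

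However, your case analysis contains a concrete error in the classes $\bar{\Rcal}$ and $\bar{\Tcal}_3$. There $\sbar^i$ lies on the \emph{nonzero boundary} of the Lorentz cone ($\sbar_1^i=\|\sbar_{2:n_i}^i\|>0$), so by Theorem \ref{th:9} a proper map gives $\rank(\Stilde^i)=\rho(\sbar^i)=1$, not $n_i$ as you assert. Your claim is also internally inconsistent with optimality: in $\bar{\Rcal}$ you correctly record $\rank(\Xtilde^i)=n_i-1$, and $\Xtilde^i\Stilde^i=0$ then forces $\rank(\Stilde^i)\le 1$; a rank-$n_i$ block would force $\Xtilde^i=0$. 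The correct pattern (matching the paper's Table \ref{tab:opmps}) is: in $\bar{\Rcal}$, $\Xtilde^i$ contributes $n_i-1$ eigenvectors to $\Bcal$ and $\Stilde^i$ contributes one to $\Ncal$; in $\bar{\Tcal}_3$, $\Stilde^i$ contributes a single eigenvector to $\Ncal$ and the remaining $n_i-1$ directions go to $\Tcal$. Once this bookkeeping is corrected, your contradiction argument (a strictly larger range space for $\Xhat$ or $\Shat$ pulls the inverse-mapped SOCO solution into a larger partition class, contradicting maximal complementarity of $(\xbar;\ybar;\sbar)$) goes through as in Theorem \ref{th:11}.
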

\begin{proof}
    The proof is analogous to the proof of Theorem~\ref{th:11}.
\end{proof}

\subsection{Mapping the Optimal Partition}
Based on the analysis described in the proof of Theorem \ref{th:11}, one can develop the mapping for the optimal partitions. Based on the sign of the eigenvalues, if positive, their corresponding eigenvectors generate the subspaces $\Bcal$ and $\Ncal$, and if zero, to subspace $\Tcal$ of the optimal partition of SDO. First we consider mapping starting from the dual side as discussed in Section \ref{ss:dsgen} and Section \ref{ss:psgen}. 
The summary of optimal partition mapping on the dual side is presented in Table \ref{tab:opmds}, where  $e_j^i$ represents the $j^{{\rm th}}$ eigenvector corresponding to $\lambda_j$ for the semidefinite cone $i$.
\begin{table}[H]
    \centering
    \large
    \renewcommand{\arraystretch}{1.25}
    \begin{tabular}{|c|c|c|c|}
    \hline
                             & $\Bcal$                                              & $\Ncal$                              & $\Tcal$                        \\ \hline
    if $i \in \bar{\Bcal}$   & $\{ e^i_1, ... , e^i_{n_i} \}$                       & $\emptyset$                          & $\emptyset$                    \\ \hline
    if $i \in \bar{\Ncal}$   & $\emptyset$                                          & $\{ e^i_1, ... , e^i_{n_i} \}$       & $\emptyset$                    \\ \hline
    if $i \in \bar{\Rcal}$   & $\{ e^i_1\}$                                         & $\{ e^i_2, ... , e^i_{n_i} \}$       & $\emptyset$                    \\ \hline
    if $i \in \bar{\Tcal_1}$ & $\emptyset$                                          & $\emptyset$                          & $\{ e^i_1, ... , e^i_{n_i} \}$ \\ \hline
    if $i \in \bar{\Tcal_2}$ & $\{ e^i_1\}$                                         & $\emptyset$                          & $\{ e^i_2, ... , e^i_{n_i}\}$  \\ \hline
    if $i \in \bar{\Tcal_3}$ & $\emptyset$                                          & $\{ e^i_2, ... , e^i_{n_i} \}$       & $\{ e^i_1\}$                   \\ \hline
    \end{tabular}\\
    \caption{The Dual Side Optimal Partition Mapping}
    \label{tab:opmds}
\end{table}
A similar analysis can be conducted for the primal side derivation, where matrix $X$ has an arrow-head structure and we represent matrix $S$ using zero, rank-one or full rank matrices, based on the position of vector $\sbar$ in the second-order cone. The optimal partition mapping is presented in Table \ref{tab:opmps}.
\begin{table}[H]
    \centering
    \large
    \renewcommand{\arraystretch}{1.25}
    \begin{tabular}{|c|c|c|c|}
    \hline
                             & $\Bcal$                                              & $\Ncal$                              & $\Tcal$                        \\ \hline
    if $i \in \bar{\Bcal}$   & $\{ e^i_1, ... , e^i_{n_i} \}$                       & $\emptyset$                          & $\emptyset$                    \\ \hline
    if $i \in \bar{\Ncal}$   & $\emptyset$                                          & $\{ e^i_1, ... , e^i_{n_i} \}$       & $\emptyset$                    \\ \hline
    if $i \in \bar{\Rcal}$   & $\{ e^i_2, ... , e^i_{n_i} \}$                       & $\{ e^i_1\}$                         & $\emptyset$                   \\ \hline
    if $i \in \bar{\Tcal_1}$ & $\emptyset$                                          & $\emptyset$                          & $\{ e^i_1, ... , e^i_{n_i} \}$ \\ \hline
    if $i \in \bar{\Tcal_2}$ & $\{ e^i_2, ... , e^i_{n_i} \}$                       & $\emptyset$                          & $\{ e^i_1\}$                   \\ \hline
    if $i \in \bar{\Tcal_3}$ & $\emptyset$                                          & $\{e^i_1\}$                          & $\{e^i_2, ... , e^i_{n_i} \}$  \\ \hline
    \end{tabular}\\
    \caption{The Primal Side Optimal Partition Mapping}
    \label{tab:opmps}
\end{table}    
We can see that the result of mapping the optimal partition for the two sides are different. We observe identical outcome for partitions $\bar{\Bcal}$, $\bar{\Ncal}$, $\bar{\Tcal}_1$, which was predictable as they include the simple case where at least one of the variables $\xbar$ and $\sbar$ is zero. However, notable difference arises for the $\bar{\Rcal}$, $\bar{\Tcal_2}$, and $\bar{\Tcal_3}$ partitions depending on whether the dual or the primal variable's representation is forced to be arrow-head. 

Consider partition $\bar{\Rcal}$, where both $\xbar$ and $\sbar$ are on the non-zero boundary of the second-order cone. Adopting the dual (primal) side approach, variable $\sbar$ ($\xbar$) will have arrow-head representation, and the other has a rank one representation. We have seen in Example \ref{exm:1}, this partition is where   complementarity will not be preserved if we apply arrow-head representation on both primal and dual sides. While our derivations guarantee feasibility and duality, we can see that complementarity is also preserved as the arrow-head matrix has only one zero eigenvalue, while the rank-one matrix has only one none-zero. For partitions $\bar{\Tcal}_2$ and $\bar{\Tcal}_3$, the difference in tables simply comes from which variable is non-zero and its corresponding arrow-head representation eigenvalues.

From the geometric point of view, we can see that for partitions $\bar{\Bcal}$ and $\bar{\Ncal}$, where the respective solution is in the interior of the second-order cone, the mapped solution is in the interior of the semidefinite cone. For partition $\bar{\Rcal}$, we observe that the solution pair, which both are on the non-zero boundary of the second-order cone is mapped to a face of the semidefinite cone. For partition $\bar{\Tcal}_1$, the zero solution pair is mapped to the origin. Finally, for partitions $\bar{\Tcal}_2$, and $\bar{\Tcal}_3$, the solutions are on the boundary of the second-order cone in the primal and dual, respectively. We can see that the result of the mapping is a face of the semidefinite cone, for both cases, depending on if the primal or dual variable is on the boundary on the second-order cone.

\section{Conclusion} \label{sec:6}
In this paper, we study the relationship between a SOCO problem and its SDO representation. Knowing about the fact that SOCO can be considered as a special case of SDO, we extend the literature by investigating both the primal and the dual side SDO representations of a SOCO problem. We demonstrate that using arrow-head matrix transformation on the primal or dual SOCO problem, does not result in an arrow-head matrix variable on its dual. In fact, nothing forces the dual variable to be arrow-head. We usually end up with dense matrices which can be represented by either rank-one or full rank mappings, based on the position of SOCO solutions in the second-order cone. We propose low-rank to full rank mappings which are admissible, meaning that they preserve feasibility and objective function value. First of all, these mappings are not unique. One can come up with different mappings that satisfies these conditions. In addition, the dense structure of mapped solutions gives us an intuition about why solving SOCO as a SOCO problem is more efficient than solving as an SDO problem. In the SDO representation, we have to deal with dense matrices when solving the SDO counterpart. Furthermore, we investigated the relationship between the optimal partitions of these problems. The optimal partition of SOCO is an index-based partition, while that of SDO is a subspace-based partition. We discussed how these partitions map to each other based on the eigenvalue analysis of mapped solutions.

The theoretical study of the relationship between SOCO and its SDO counterparts remains with unsolved questions. One interesting question to look into is to study how degeneracy and singularity degree properties are affected throughout mappings. 

\clearpage

\appendix
\section{Proof of Theorem \ref{th:1}} \label{appsec:1}
\begin{proof}
To prove that the presented mapping in Theorem \ref{th:1} is admissible, we need to show that it complies with the definition of admissible mapping, i.e. preservation of feasibility and the objective function value. \\
\textbf{Step 1.} First, we show that if $(\ybar,\sbar) \in \Fcal_{\Dcal^1_{SOCO}}$, then $(y,S) = (\ybar,\arw(\sbar)) \in \Fcal_{\Dcal^{D}_{SDO}}$. Since $S=\arw(\sbar)$ and $\sbar\in \Lcal^n$, we know that $S$ is positive semidefinite \cite{alizadeh2003second}. The only thing remains to prove is that $\sum_{i=1}^n y_i \Avec_i + S=\Cvec$. Since $(\ybar,\sbar)$ is feasible, i.e., $A^T\ybar+\sbar=c$, we have 
\begin{align*}
    \sum_{i=1}^n y_i \Avec_i + S &=
    \begin{bmatrix}
    \sum_{i=1}^n \ybar_i a_{i1} + \sbar_1 & \sum_{i=1}^n \ybar_i a_{i2} + \sbar_2 &   ...  & \sum_{i=1}^n \ybar_i a_{in} + \sbar_n \\
    \sum_{i=1}^n \ybar_i a_{i2} + \sbar_2 & \sum_{i=1}^n \ybar_i a_{i1} + \sbar_1 &        &                                   \\
    \vdots                            &                                   & \ddots &                                   \\
    \sum_{i=1}^n \ybar_i a_{in} + \sbar_n &                                   &        & \sum_{i=1}^n \ybar_i a_{i1} + \sbar_1 
    \end{bmatrix}\\&=\begin{bmatrix}
    c_1 & c_2 &   ...  & c_n \\
    c_2 & c_1 &        &                                   \\
    \vdots                            &                                   & \ddots &                                   \\
    c_n &                                   &        & c_1 
    \end{bmatrix}=\Cvec.
\end{align*}
\textbf{Step 2.}  We show that if $\xbar = (\xbar_1, ..., \xbar_n) \in \Fcal_{\Pcal^1_{SOCO}}$, then matrix $X$ as defined in Theorem \ref{th:1}, belongs to $\Fcal_{\Pcal^{D}_{SDO}}$. By construction, $X$ is positive semidefinite and we need to just show that $\trace{(\Avec_iX)}=b_i$. Based on the construction of $X$ and feasibility of $\xbar$, i.e., $A\xbar=b$, we have
\begin{align*}
    \text{Tr}(\Avec_iX) &= a_{i1} (\sum_{i=1}^n X_{ii}) + 2a_{i2} X_{12} + ... + 2 a_{in} X_{1n}\\
    &=a_{i1}\xbar_1+ a_{i2} \xbar_2 +... +  a_{in} \xbar_n=b_i.
\end{align*}
\textbf{Step 3.} It is obvious that $b^T \ybar = b^T y$. Similar to step 2, we have
\begin{align*}
    \text{Tr}{(\Cvec X)} &= c_{1} (\sum_{i=1}^n X_{ii}) + 2c_{2} X_{12} + ... + 2 c_{n} X_{1n}\\
    &=c_{1}\xbar_1+ c_{2} \xbar_2 +... +  c_{n} \xbar_n=c^T\xbar.
\end{align*}
\textbf{Step 4.} In this step,  we  show that if $X \in \Fcal_{\Pcal^{D}_{SDO}}$, then $\xbar \in \Fcal_{\Pcal^1_{SOCO}}$. Suppose that $X \in \Fcal_{\Pcal^{D}_{SDO}}$, then it is positive semidefinite. We need to show that $\xbar \in \Lcal^n$. To do so, we utilize that in a positive semidefinite matrix every principal submatrix, in particular every 2-by-2 submatrix is positive semidefinite \cite{horn2012matrix}. Thus,
\begin{align*}
    |X_{ij}| \leq \sqrt{X_{ii} X_{jj}} \qquad \qquad\text{ for all } i = 1,...,n \text{ and }  \text{ for all } j = 1,...,n
\end{align*}
Thus, we have
\begin{align}
    X_{12}^2 + X_{13}^2 + ... + X_{1n}^2 &\leq (X_{11}X_{22}) + (X_{11}X_{33}) + ... + (X_{11}X_{nn})\\
                                         &\leq X_{11} (X_{22}+ X_{33} + ... + X_{nn}),\\
    \sqrt{X_{12}^2 + X_{13}^2 + ... + X_{1n}^2} &\leq \sqrt{X_{11} (X_{22}+ X_{33} + ... + X_{nn})}\\
                                                &\leq \frac{X_{11} + X_{22}+ X_{33} + ... + X_{nn}}{2} \label{eq:arigeo},
\end{align}
where (\ref{eq:arigeo}) is derived using the arithmetic-geometric mean inequality.
The expression can be rewritten as
\begin{align*}
    \sum_{i=1}^n X_{ii} &\geq 2\sqrt{X_{12}^2 + X_{13}^2 + ... + X_{1n}^2}\\
                        & =   \sqrt{(2X_{12})^2 + (2X_{13})^2 + ... + (2X_{1n})^2},                     
\end{align*}
or simply,
\begin{align*}
    \sum_{i=1}^n X_{ii} &\geq \sqrt{(2X_{12})^2 + (2X_{13})^2 + ... + (2X_{1n})^2},\\
    \xbar_1 &\geq  \sqrt{(\xbar_2)^2 + ... + (\xbar_n)^2}=||\xbar_{2:n}||_2,
\end{align*}
which shows that $\xbar \in \Lcal^n$.
Now, we need to prove that $A\xbar = b$. Thus, we have
\begin{align*}
    A_i\xbar &= a_{i1} \xbar_1 + a_{i2} \xbar_2 + ... + a_{in} \xbar_n \qquad \text{ for all } i = 1,...,m,\\
    &= a_{i1} (\sum_{i=1}^n X_{ii}) + a_{i2} (2X_{12}) + ... +  a_{in} (2X_{1n})\\
    &= \text{Tr}(\Avec_iX)=b_i.
\end{align*}
\textbf{Step 5.} Finally, we need to show that if $(y,S) \in \Fcal_{\Dcal^{D}_{SDO}}$, then $(\ybar,\sbar) \in \Fcal_{\Dcal^1_{SOCO}}$. As $(y,S) = (\ybar,\arw(\sbar))$, according to \cite{alizadeh2003second}, we know that $S = \arw(\sbar) \succeq 0$, implies $\sbar \in \Lcal^n$. Then, the only property that remains to prove is that $\sum_{i=1}^m \ybar_i A_i + \sbar = c$. By feasibility of $(y,S)$, we know that $\sum_{i=1}^n y_i \Avec_i + S = \Cvec$, i.e.,
\begin{align*}
    \begin{bmatrix}
    \sum_{i=1}^n \ybar_i a_{i1} + \sbar_1 & \sum_{i=1}^n \ybar_i a_{i2} + \sbar_2 &   ...  & \sum_{i=1}^n \ybar_i a_{in} + \sbar_n \\
    \sum_{i=1}^n \ybar_i a_{i2} + \sbar_2 & \sum_{i=1}^n \ybar_i a_{i1} + \sbar_1 &        &                                   \\
    \vdots                            &                                   & \ddots &                                   \\
    \sum_{i=1}^n \ybar_i a_{in} + \sbar_n &                                   &        & \sum_{i=1}^n \ybar_i a_{i1} + \sbar_1 
    \end{bmatrix}=\begin{bmatrix}
    c_1 & c_2 &   ...  & c_n \\
    c_2 & c_1 &        &                                   \\
    \vdots                            &                                   & \ddots &                                   \\
    c_n &                                   &        & c_1 
    \end{bmatrix},
\end{align*}
where each element of the matrix is a constraint in (\ref{mo:socod}) which means $\sum_{i=1}^m \ybar_i A_i + \sbar = c$.

Considering all of the previous steps, we conclude that presented mapping in Theorem \ref{th:1} is an admissible mapping.
\end{proof} 
\clearpage

\section{Proof of Theorem~\ref{th:3}} \label{appsec:2}
\begin{proof}
A rank-$n$ mapping of $\bar{x}$ can be constructed by Algorithm~\ref{alg:FRM}.
\begin{algorithm}[H]
\caption{Full Rank Mapping}\label{alg:FRM}
\begin{algorithmic}
\STATE $\pi^1 = \bar{x}$, 
\STATE Choose sufficiently small $\epsilon$
\FOR{$k=1:n-1$}
\STATE $\tau^k\gets \frac{\pi^k}{2}$ 
\STATE  
\begin{equation*}
    \begin{cases}
     \tau{}^k_{k+1} = \tau^k_{k+1}+\epsilon & \text{ if } \pi^k_{k+1}\geq 0\\
     \tau{}^k_{k+1} = \tau^k_{k+1}-\epsilon & \text{ if } \pi^k_{k+1}< 0
    \end{cases}
\end{equation*}
\STATE Calculate \begin{equation*}
    \beta^k =\frac{1}{2\sqrt{\frac{\tau{}^k_1 + \hat{\delta}^k}{2}}} (\tau{}^k_1 + \hat{\delta}^k, \tau{}^k_2, ..., \tau{}^k_n)^T,\text{ where }\hat{\delta}^k = \sqrt{(\tau{}^k_1)^2 - ||\tau{}^k_{2:n}||^2}.
    \end{equation*}
\STATE $\pi^{k+1}=\pi^k-\tau^k$
\ENDFOR
\STATE Calculate \begin{equation*}
    \beta^n =\frac{1}{2\sqrt{\frac{\pi{}^n_1 + \hat{\delta}^n}{2}}} (\pi{}^n_1 + \hat{\delta}^n, \pi{}^n_2, ..., \pi{}^n_n)^T,\text{ where }\hat{\delta}^n = \sqrt{(\pi{}^n_1)^2 - ||\pi{}^k_{2:n}||^2}.
    \end{equation*}
\end{algorithmic}
\end{algorithm}
If $\|\pi_{2:n}^k\|< \pi_1^k$ for all $k$, then we have
\begin{equation}\label{eq2}
\begin{aligned}
&0 < \frac{||\pi_{2:n}^k||}{2} \leq||\tau_{2:n}^k||<\tau_1^k \leq \frac{\pi_1^k}{2} < \pi_1^k,\\
    &\begin{cases}
    \frac{\pi_j^k}{2} \leq \tau_j^k & \text{ if } \pi_j\geq 0\\
    \frac{\pi_j^k}{2} \geq \tau_j^k & \text{ if } \pi_j< 0
    \end{cases}
     ,\qquad j = 2,...,n.
\end{aligned}
\end{equation}
We need to show that in all loops $\|\pi_{2:n}^k\|< \pi_1^k$. For $k=1$ this holds since $\|\xbar_{2:n}\|< \xbar_1$. Using induction, we need to prove $\|\pi_{2:n}^{k+1}\|<\pi_1^{k+1}$ assuming $\|\pi_{2:n}^{k}\|<\pi_1^{k}$. We have 
$$\pi_1^{k+1}-\|\pi_{2:n}^{k+1}\|=(\pi_1^{k}-\tau_1^k)-\|(\pi_j^{k}-\tau_j^k)_{2:n}\|\geq \tau_1^k-\|\tau_{2:n}^k\|>0.$$
Thus, $\pi_1^k>\|\pi_{2:n}^k\|$ for all $k$. The proposed $MR$ is also admissible since
\begin{align*}
    \sum_{j=1}^n MR_{jj}&= \sum_{k=1}^n\sum_{j=1}^n (\beta^k_j)^2= \xbar_1\\
    MR_{1j}&= \sum_{k=1}^n \beta^k_j\beta^k_1= \frac{\xbar_j}{2}.
\end{align*}
We need to prove that the generated $\beta^k$ vectors are linearly independent. We have
\begin{equation*}
    \beta^k =\frac{1}{2\sqrt{\frac{\tau{}^k_1 + \hat{\delta}^k}{2}}} (\tau{}^k_1 + \hat{\delta}, \tau{}^k_2, ..., \tau{}^k_n)^T,\text{ where }\hat{\delta}^k = \sqrt{(\tau{}^k_1)^2 - ||\tau{}^k_{2:n}||^2}.
    \end{equation*}
Let us define the matrix
\begin{align*}
    \mathbb{B}&=\begin{bmatrix}
\tau{}^1_1 + \hat{\delta}^1 & \tau{}^2_1 + \hat{\delta}^2& \dots&\tau{}^n_1 + \hat{\delta}^n\\
\tau{}^1_2 & \tau{}^2_2 & \dots&\tau{}^n_2 \\
\vdots & \vdots & \ddots&\vdots \\
\tau{}^1_n & \tau{}^2_n & \dots&\tau{}^n_n \\
\end{bmatrix}
\end{align*}

We still need to prove that $\mathbb{B}$ has full rank.
Now, w.l.o.g. we may assume that $\xbar\geq 0$. One can easily extend the following proof to general $\xbar$. Then we have
\begin{align*}
    \mathbb{B}&=\begin{bmatrix}
\tau{}^1_1 + \hat{\delta}^1 & \tau{}^2_1 + \hat{\delta}^2& \dots&\tau{}^n_1 + \hat{\delta}^n\\
\tau{}^1_2 & \tau{}^2_2 & \dots&\tau{}^n_2 \\
\vdots & \vdots & \ddots&\vdots \\
\tau{}^1_n & \tau{}^2_n & \dots&\tau{}^n_n \\
\end{bmatrix}\\
&=\begin{bmatrix}
\frac{\pi{}^1_1}{2} + \hat{\delta}^1 & \frac{\pi{}^2_1}{2} + \hat{\delta}^2& \dots&\pi{}^n_1 + \hat{\delta}^n\\
\frac{\pi{}^1_2}{2}+\epsilon & \frac{\pi{}^2_2}{2} & \dots&\pi{}^n_2 \\
\frac{\pi{}^1_3}{2} & \frac{\pi{}^2_3}{2}+\epsilon & \dots&\pi{}^n_3 \\
\vdots & \vdots & \ddots&\vdots \\
\frac{\pi{}^1_n}{2} & \frac{\pi{}^2_n}{2} & \dots&\pi{}^n_n \\
\end{bmatrix}\\
&=\begin{bmatrix}
\frac{\xbar_1}{2} + \hat{\delta}^1 & \frac{\xbar_1}{4} + \hat{\delta}^2& \dots&\frac{\xbar_1}{(2)^n} + \hat{\delta}^n\\
\frac{\xbar_2}{2}+\epsilon & \frac{\xbar_2}{4}-\frac{\epsilon}{2} & \dots&\frac{\xbar_1}{(2)^n}-\frac{\epsilon}{(2)^{n-1}} \\
\frac{\xbar_3}{2} & \frac{\xbar_3}{4}+\epsilon & \dots&\frac{\xbar_1}{(2)^n}-\frac{\epsilon}{(2)^{n-2}} \\
\vdots & \vdots & \ddots&\vdots \\
\frac{\xbar_n}{2} & \frac{\xbar_n}{4} & \dots&\frac{\xbar_1}{(2)^n}-\frac{\epsilon}{(2)} \\
\end{bmatrix}\\
&=\begin{bmatrix}
\frac{\xbar_1}{2}  & \frac{\xbar_1}{4} & \dots&\frac{\xbar_1}{(2)^n} \\
\frac{\xbar_2}{2} & \frac{\xbar_2}{4} & \dots&\frac{\xbar_1}{(2)^n} \\
\frac{\xbar_3}{2} & \frac{\xbar_3}{4} & \dots&\frac{\xbar_1}{(2)^n} \\
\vdots & \vdots & \ddots&\vdots \\
\frac{\xbar_n}{2} & \frac{\xbar_n}{4} & \dots&\frac{\xbar_1}{(2)^n}\\
\end{bmatrix}+\begin{bmatrix}
 \hat{\delta}^1 &  \hat{\delta}^2& \dots& \hat{\delta}^{n-1}& \hat{\delta}^n\\
\epsilon & -\frac{\epsilon}{2} & \dots&-\frac{\epsilon}{(2)^{n-2}}&-\frac{\epsilon}{(2)^{n-1}} \\
0 & \epsilon & \dots&-\frac{\epsilon}{(2)^{n-3}}&-\frac{\epsilon}{(2)^{n-2}} \\
\vdots & \vdots & \dots& \vdots &\vdots \\
0 & 0 & \dots&\epsilon &-\frac{\epsilon}{(2)} \\
\end{bmatrix},
\end{align*}
where
\begin{equation*}
    \begin{bmatrix}
    \hat{\delta}^1\\
    \hat{\delta}^2\\
    \vdots\\
    \hat{\delta}^n\\
    \end{bmatrix}=
    \begin{bmatrix}
    \sqrt{(\frac{\xbar_1}{2})^2-(\frac{\xbar_2}{2}+\epsilon)^2-(\frac{\xbar_3}{2})^2-\dots-(\frac{\xbar_n}{2})^2}\\
    \sqrt{(\frac{\xbar_1}{4})^2-(\frac{\xbar_2}{4}-\frac{\epsilon}{2})^2-(\frac{\xbar_3}{4}+\epsilon)^2-\dots-(\frac{\xbar_n}{4})^2}\\
    \vdots\\
    \sqrt{(\frac{\xbar_1}{(2)^n})^2-(\frac{\xbar_1}{(2)^n}-\frac{\epsilon}{(2)^{n-1}})^2-(\frac{\xbar_3}{4}-\frac{\epsilon}{(2)^{n-1}})^2-\dots-(\frac{\xbar_n}{(2)^n}-\frac{\epsilon}{2})^2}\\
    \end{bmatrix}.
\end{equation*}
By doing row eliminations on both matrices to sparsify the second matrix, we get
$$\begin{bmatrix}
\frac{\xbar_1}{2}  & 2\frac{\xbar_1}{4} & \dots&n\frac{\xbar_1}{(2)^n} \\
\frac{\xbar_2}{2} & 2\frac{\xbar_2}{4} & \dots&n\frac{\xbar_1}{(2)^n} \\
\frac{\xbar_3}{2} & 2\frac{\xbar_3}{4} & \dots&n\frac{\xbar_1}{(2)^n} \\
\vdots & \vdots & \ddots&\vdots \\
\frac{\xbar_n}{2} & 2\frac{\xbar_n}{4} & \dots&n\frac{\xbar_1}{(2)^n}\\
\end{bmatrix}+\begin{bmatrix}
 0& 0& \dots& 0& \bar{\delta}\\
\epsilon & 0 & \dots&0&0 \\
0 & \epsilon & \dots&0&0 \\
\vdots & \vdots & \dots&\vdots&\vdots \\
0 & 0 & \dots&\epsilon &0 \\
\end{bmatrix}$$
where
\begin{align*}
    \bar{\delta}= \frac{\hat{\delta}^1}{(2)^{n-1}}+\frac{\hat{\delta}^2}{(2)^{n-2}}+\dots+\frac{\hat{\delta}^{n-1}}{2}+\hat{\delta}^n.
\end{align*}
As we can see $\bar{\delta}$, as sum of strictly positive numbers, is strictly positive, and the matrix has full rank. By choosing small $\epsilon$, we can show that $\mathbb{B}$ has full rank. The last piece is to find appropriate value for $\epsilon$. We need to choose $\epsilon$ so that
\begin{align*}
    (\frac{\xbar_1}{2})^2-(\frac{\xbar_2}{2}+\epsilon)^2-(\frac{\xbar_3}{2})^2-\dots-(\frac{\xbar_n}{2})^2&>0\\
    (\frac{\xbar_1}{4})^2-(\frac{\xbar_2}{4}-\frac{\epsilon}{2})^2-(\frac{\xbar_3}{4}+\epsilon)^2-\dots-(\frac{\xbar_n}{4})^2&>0\\
    &\vdots\\
    (\frac{\xbar_1}{(2)^n})^2-(\frac{\xbar_1}{(2)^n}-\frac{\epsilon}{(2)^{n-1}})^2-(\frac{\xbar_3}{4}-\frac{\epsilon}{(2)^{n-1}})^2-\dots-(\frac{\xbar_n}{(2)^n}-\frac{\epsilon}{2})^2&>0.
    \end{align*}
Let $\varrho=(\xbar_1)^2-\sum_{i=2}^n(\xbar_i)^2>0$. Then, we have
\begin{align*}
    4\epsilon^2+4\xbar_2\epsilon-\varrho&<0 \\
    20\epsilon^2+(8\xbar_3-4\xbar_2)\epsilon-\varrho&< 0\\
    &\vdots
    \end{align*}
Since the second derivatives of all quadratic forms in these inequalities are positive, they are convex. Since coefficients of $\epsilon^2$ and $\rho$ are strictly positive, they have two distinct roots. Thus, any epsilon in the intersection of all these intervals satisfies the required conditions. The intersection of them is non-empty since we have a valid choice $\epsilon=\frac{\xbar_1 - \|\xbar_{2:n} \|}{2(n-1)}$ as discussed in Section~\ref{ssec:FRM}.
\end{proof}

\section{Proof of Theorem \ref{th:6}} \label{appsec:3}
\begin{proof}
Proof is similar to that of Theorem \ref{th:1}, showing that under the proposed setting $(\xbar, \ybar, \sbar)$ and $(X,y,S) = \Mcal(\xbar, \ybar, \sbar)$ satisfy the primal and dual constraints and preserves the objective function value. To this end, we need to take the following steps.

\textbf{Step 1.}
First, we show that if $\xbar \in \Fcal_{\Pcal^1_{SOCO}}$, then $X = \arw(\xbar) \in \Fcal_{\Pcal^{P}_{SDO}}$. First, we know that since $\xbar \in \Lcal^n$, then $X = \arw(\xbar) \succeq 0$. Next, we need to show that $X$ satisfies the SDO primal constraint. Thus,
\begin{align*}
    \trace{(\Avec_i X)} &= \frac{1}{n} a_{i1} (\sum_{i=1}^n X_{ii}) + \frac{2}{2} a_{i2} X_{12} + ... + \frac{2}{2} a_{in} X_{1n}\\
    &= a_{i1} \xbar_1 + a_{i2} \xbar_{2} + ... + a_{in} \xbar_n \\
    &= a_{(i)} \xbar = b_i, && \text{ for all } i = 1,...,m.
\end{align*}
The second equality represents the product of $i^{\text{\rm th}}$ row of matrix $A$ and vector $\xbar$. Hence, we can conclude that $X$ satisfies the SDO primal constraint. 

\textbf{Step 2.}
Next, we show that if $(\ybar,\sbar) \in \Fcal_{\Dcal^1_{SOCO}}$, then $(y,S) \in \Fcal_{\Dcal^{P}_{SDO}}$. By definition and using $u$ and $w$ in construction of $S$, we have $S \succeq 0$. Next, we need to show that $S$ satisfies the constraint of \eqref{mo:2dsdod}. Thus, since $(\ybar,\sbar)$ is feasible for \eqref{mo:socod}, i.e. $\sum_{i=1}^m \ybar_i a_{ij} + \sbar_j = c_j$ for all $j=1,...,n$,  we can write

\small
\begin{align*}
    \sum_{i=1}^m y_i \Avec_i + \sum_{h\neq 1, h < l} w_{hl} \Atilde_{hl} + \sum_{k=2}^n u_k \Ahat_k + S & = 
    \begin{bmatrix}
    \frac{1}{n} \sum_{i=1}^m y_i a_{i1} & \frac{1}{2} \sum_{i=1}^m y_i a_{i2} &   ...  & \frac{1}{2} \sum_{i=1}^m y_i a_{in} \\
    \frac{1}{2} \sum_{i=1}^m y_i a_{i2} & \frac{1}{n} \sum_{i=1}^m y_i a_{i1} &        &                                     \\
    \vdots                             &                                     & \ddots &                                      \\ \frac{1}{2} \sum_{i=1}^m y_i a_{in} &                                     &        & \frac{1}{n} \sum_{i=1}^m y_i a_{i1} 
    \end{bmatrix}\\
    & + 
    \begin{bmatrix}
    0        &  0       &  \hdots  & \hdots     & 0          \\
    0        &  0       &  w_{23}  & \hdots     & w_{2n}     \\
    \vdots   &  w_{23}  &  \ddots  & \ddots     & \vdots     \\
    \vdots   &  \vdots  &  \ddots  & \ddots     & w_{(n-1)n} \\ 
    0        &  w_{2n}  &  \hdots  & w_{(n-1)n} & 0          \\
    \end{bmatrix}\\
    & + 
    \begin{bmatrix}
    \sum_{k=2}^n u_k &    0      & \hdots & \hdots & \hdots &    0  \\
    0                &    -u_2   &    0   & \hdots & \hdots &    0  \\
    \vdots           &    0      & \ddots & \ddots &        & \vdots\\
    \vdots           & \vdots    & \ddots &   -u_i   & \ddots & \vdots\\
    \vdots           & \vdots    &        & \ddots & \ddots &    0  \\
    0                &    0      & \hdots & \hdots & 0      &    -u_n  \\
    \end{bmatrix}\\
    & + 
    \begin{bmatrix}
    \frac{\sbar_1}{n} - \sum_{k=2}^n u_k      &  \frac{\sbar_2}{2}     &   \hdots                    & \hdots                  & \frac{\sbar_n}{2}\\
    \frac{\sbar_2}{2}                       &  \frac{\sbar_1}{n}+u_2  &  -w_{23}           & \hdots                  &  -w_{2n} \\
    \vdots                                        &  -w_{23}           &  \frac{\sbar_1}{n}+u_3  & \ddots                  &  \vdots   \\
    \vdots                                        &   \vdots                    &   \ddots                    & \ddots                  &  -w_{(n-1)n} \\ 
    \frac{\sbar_n}{2}                        &  -w_{2n}           &   \hdots                    & -w_{(n-1)n} & \frac{\sbar_1}{n}+u_n \\
    \end{bmatrix}\\
    & = 
    \begin{bmatrix}
    \frac{1}{n} c_1    & \frac{1}{2}c_2 &  ...   & \frac{1}{2}c_n \\
    \frac{1}{2} c_2    & \frac{1}{n}c_1 &        &     \\
    \vdots &     & \ddots &    \\
    \frac{1}{2} c_n    &     &        & \frac{1}{n} c_1 
    \end{bmatrix} = \Cvec. 
\end{align*}
\normalsize
\textbf{Step 3.}
It is obvious that $b^T \ybar = b^T y$. Similar to step 1, we have
\begin{align*}
    \trace{(\Cvec X)} &= \frac{1}{n} c_1 (\sum_{i=1}^n X_{ii}) + \frac{2}{2} c_2 X_{12} + ... + \frac{2}{2} c_n X_{1n}\\
    &= c_1 \xbar_1 + c_2 \xbar_{2} + ... + c_n \xbar_n \\
    &= c^T \xbar.
\end{align*}
\textbf{Step 4.} In this step, we need to show that if $X \in \Fcal_{\Pcal^{P}_{SDO}}$, then $\xbar \in \Fcal_{\Pcal^1_{SOCO}}$. To this end, we know that as $X = \arw(\xbar)$ is positive semidefinite, then $\xbar \in \Lcal^n$. Next, we need to show that $A\xbar = b$. Thus, we have
\begin{align*}
    A_i \xbar &= a_{i1} \xbar_1 + a_{i2} \xbar_2 + ... + a_{in} \xbar_n \\
    & = \frac{1}{n} a_{i1} (n\xbar_1) + 2(\frac{1}{2} a_{i2}) \xbar_2 + ... + 2(\frac{1}{2} a_{in}) \xbar_n\\
    & = \trace{(\Avec_i X)} = b_i, && \text{ for all } i = 1,...,m.
\end{align*}
\textbf{Step 5.} Finally, we need to show that if $(y,S) \in \Fcal_{\Dcal^{P}_{SDO}}$, then $(\ybar,\sbar) \in \Fcal_{\Dcal^1_{SOCO}}$. Suppose that $S \in \Fcal_{\Dcal^{P}_{SDO}}$, then it is positive semidefinite. We need to show that $\sbar \in \Lcal^n$. To do so, recall that in a positive semidefinite matrix every principal submatrix, in particular every 2-by-2 submatrix is positive semidefinite \cite{horn2012matrix}. Thus,
\begin{align*}
    |S_{ij}| \leq \sqrt{S_{ii} S_{jj}} \qquad \qquad\text{ for all } i = 1,...,n \text{ and }  \text{ for all } j = 1,...,n
\end{align*}
Thus, we have
\begin{align}
    S_{12}^2 + S_{13}^2 + ... + S_{1n}^2 &\leq (S_{11}S_{22}) + (S_{11}S_{33}) + ... + (S_{11}S_{nn})\\
                                         &\leq S_{11} (S_{22}+ S_{33} + ... + S_{nn}),\\
    \sqrt{S_{12}^2 + S_{13}^2 + ... + S_{1n}^2} &\leq \sqrt{S_{11} (S_{22}+ S_{33} + ... + S_{nn})}\\
                                                &\leq \frac{S_{11} + S_{22}+ S_{33} + ... + S_{nn}}{2} \label{eq:arigeo2},
\end{align}
where (\ref{eq:arigeo2}) is derived using the arithmetic-geometric mean inequality.
The expression can be rewritten as
\begin{align*}
    \sum_{i=1}^n S_{ii} &\geq 2\sqrt{S_{12}^2 + S_{13}^2 + ... + S_{1n}^2}\\
                        & =   \sqrt{(2S_{12})^2 + (2S_{13})^2 + ... + (2S_{1n})^2},                     
\end{align*}
or simply,
\begin{align*}
    \sbar_1 &\geq  \sqrt{(\sbar_2)^2 + ... + (\sbar_n)^2}=||\sbar_{2:n}||_2,
\end{align*}
which according to definition of \eqref{eq:derivedS} shows that $\sbar \in \Lcal^n$. Next, we need to show that vector 
\begin{align*}
    \sbar = \begin{bmatrix}
    c_1 - \sum_{i=1}^{m} y_i a_{i1}\\
    \vdots \\
    c_n - \sum_{i=1}^{m} y_i a_{in},
    \end{bmatrix}
\end{align*}
satisfies the SOCO dual constraint. This is obvious since each entry of this vector is equal to the corresponding entry of vector $\sbar$ in the definition of \eqref{mo:socod}.
\end{proof}

\vskip 6mm
\noindent{\bf Acknowledgements}

\noindent
This research is supported by the National Science Foundation (NSF) under Grant No. 2128527.
\bibliographystyle{plain}
\bibliography{reference}

\begin{thebibliography}{10}

\bibitem{alizadeh2003second}
Farid Alizadeh and Donald Goldfarb.
\newblock Second-order cone programming.
\newblock {\em Mathematical Programming}, 95(1):3--51, 2003.

\bibitem{andreani2022global}
Roberto Andreani, Gabriel Haeser, Leonardo~M Mito, C~H{\'e}ctor Ram{\'\i}rez,
  and Thiago~P Silveira.
\newblock Global convergence of algorithms under constant rank conditions for
  nonlinear second-order cone programming.
\newblock {\em Journal of Optimization Theory and Applications}, 195(1):42--78,
  2022.

\bibitem{ben2001lectures}
Aharon Ben-Tal and Arkadi Nemirovski.
\newblock {\em Lectures on Modern Convex Optimization: Analysis, Algorithms,
  and Engineering Applications}.
\newblock SIAM, 2001.

\bibitem{de2006aspects}
Etienne De~Klerk.
\newblock {\em Aspects of Semidefinite Programming: Interior Point Algorithms
  and Selected Applications}, volume~65.
\newblock Springer Science \& Business Media, 2006.

\bibitem{dueri2014automated}
Daniel Dueri, Jing Zhang, and Beh{\c{c}}et A{\c{c}}ikmese.
\newblock Automated custom code generation for embedded, real-time second order
  cone programming.
\newblock In {\em Proceedings of the 19th IFAC World Congress. Ed. E. Boje, and
  X. Xia}, volume~47, pages 1605--1612. Elsevier, 2014.

\bibitem{horn2012matrix}
Roger~A. Horn and Charles~R. Johnson.
\newblock {\em Matrix Analysis}.
\newblock Cambridge University Press, 2012.

\bibitem{jin2013exact}
Qingwei Jin, Ye~Tian, Zhibin Deng, Shu-Cherng Fang, and Wenxun Xing.
\newblock Exact computable representation of some second-order cone constrained
  quadratic programming problems.
\newblock {\em Journal of the Operations Research Society of China},
  1(1):107--134, 2013.

\bibitem{mohammad2019conic}
Ali Mohammad-Nezhad.
\newblock {\em Conic Optimization: Optimal Partition, Parametric, and Stability
  Analysis}.
\newblock PhD thesis, Lehigh University, 2019.

\bibitem{sim2005note}
Chee-Khian Sim and Gongyun Zhao.
\newblock A note on treating a second order cone program as a special case of a
  semidefinite program.
\newblock {\em Mathematical Programming}, 102(3):609--613, 2005.

\bibitem{wolkowicz2012handbook}
Henry Wolkowicz, Romesh Saigal, and Lieven Vandenberghe.
\newblock {\em Handbook of Semidefinite Programming: Theory, Algorithms, and
  Applications}, volume~27.
\newblock Springer Science \& Business Media, 2012.

\bibitem{zhou2016further}
Jinchuan Zhou, Jingyong Tang, and Jein-Shan Chen.
\newblock Further relationship between second-order cone and positive
  semidefinite matrix cone.
\newblock {\em Optimization}, 65(12):2115--2133, 2016.

\end{thebibliography}
\end{document}